\newtheorem{thm}{Theorem}[section]
\newtheorem{lem}[thm]{Lemma}
\newtheorem{cor}[thm]{Corollary}
\newtheorem{prop}[thm]{Proposition}
\newtheorem{definition}[thm]{Definition}
\newtheorem{remark}[thm]{Remark}
\newtheorem*{thm*}{Theorem}
\newtheorem{fact}[thm]{Fact}
\renewcommand{\epsilon}{\varepsilon}
\DeclarePairedDelimiterX{\norm}[1]{\lVert}{\rVert}{#1}
\DeclarePairedDelimiterX{\abs}[1]{\lvert}{\rvert}{#1}
\DeclarePairedDelimiterX\braket[2]{\langle}{\rangle}{#1\,\delimsize\vert\,\mathopen{}#2}
\DeclarePairedDelimiterX{\gen}[1]{\langle}{\rangle}{#1}
\newcommand{\bbQ}{\mathbb{Q}}
\newcommand{\R}{\mathcal{R}}
\newcommand{\cstar}{$\mathrm{C}^*$}
\renewcommand{\cal}[1]{\mathcal{#1}}
\newcommand{\dminus}{\buildrel\textstyle\ .\over{\hbox{ \vrule height3pt depth0pt width0pt}{\smash-}}}
\newcommand{\spec}{\operatorname{spec}}
\newcommand{\MIPstar}{\mathsf{MIP}^*}
\newcommand{\MIPco}{\mathsf{MIP}^{\mathsf{co}}}
\newcommand{\RE}{\mathsf{RE}}
\newcommand{\coRE}{\mathsf{coRE}}
\title[The Universal Theory of Locally Universal $\mathrm{II}_1$ factors is not Computable]{The Universal Theory of Locally Universal Tracial von Neumann Algebras is not Computable}
\author{Jananan Arulseelan}
\email{jananan@iastate.edu}
\address{Department of Mathematics, Iowa State University, 396 Carver Hall, 411 Morrill Road, Ames, IA 50011, USA, \url{https://sites.google.com/view/jananan-arulseelan}}
\author{Aareyan Manzoor}
\email{a2manzoo@uwaterloo.ca}
\address{Department of Mathematics, University of Waterloo, University Avenue, Waterloo, Ontario, Canada, \url{https://aareyanmanzoor.github.io} }
\begin{document}

 \begin{abstract}
Building on Lin’s breakthrough $\MIPco = \coRE$ and an encoding of non-local games as universal sentences in the language of tracial von Neumann algebras, we show that locally universal tracial von Neumann algebras have undecidable universal theories. This implies that no such algebra admits a computable presentation.  Our results also provide, for the first time,  examples of separable $\mathrm{II}_1$ factors without computable presentations, and in fact yield a broad family of them, including McDuff factors, factors with neither property~$\Gamma$ nor property~(T), and property~(T) factors.   We also obtain analogous results for locally universal semifinite von Neumann algebras and tracial $C^*$-algebras.  The latter provides strong evidence for a negative solution to the Kirchberg embedding problem.  We discuss how these are obstructions to approximation properties in the class of tracial/semifinite von Neumann algebras.
\end{abstract}

\maketitle

\section{Introduction}

The Connes Embedding Problem (CEP), first posed as an offhand comment in \cite{connes_classification_1976}, asked whether the hyperfinite tracial von Neumann algebra $\R$ is \textbf{locally universal} for tracial von Neumann algebras.  That is, does every separable tracial von Neumann algebra embed into an ultrapower of $\R$?  At a conceptual level, CEP asked whether all tracial von Neumann algebras can be faithfully approximated by finite-dimensional matrix algebras.  Because of its connections to Kirchberg’s QWEP Conjecture, Tsirelson’s Problem in quantum information, the Microstates Conjecture in free probability, and the hyperlinearity problem for groups, CEP stood for decades as one of the central open problems in operator algebras.  

After more than 40 years, CEP was resolved negatively by Ji--Natarajan--Vidick--Wright--Yuen \cite{ji_mip_2020}.  Their breakthrough imported ideas from quantum complexity theory, showing that the complexity classes $\MIPstar$ and $\RE$ coincide. Building further, Goldbring and Hart \cite{goldbring_universal_2024} used model-theoretic techniques and part of the $\MIPstar = \RE$ machinery to show that the universal theory of $\R$ is undecidable, thereby strengthening the negative solution to CEP and producing non-Connes embeddable tracial von Neumann algebras in many natural classes.  Their work also sparked a significant volume of work in computable model theory of operator algebras.

 Lin recently announced a proof of the dual result $\MIPco = \coRE$ in a preprint \cite{lin_mipco_2025}.  From an operator algebraic perspective, this is arguably more striking: while $\MIPstar=\RE$ yields results about the class of Connes embeddable tracial von Neumann algebras, $\MIPco = \coRE$ yields results about the class of \textbf{all} tracial von Neumann algebras.  In \cite{manzoor_invariant_2025}, the second author suggested that $\MIPco=\coRE$ should imply that the class of tracial von Neumann algebras admits no uniform approximation property.  Our main result is a model theoretic formulation of this prediction:

\begin{thm*}
    Suppose $\MIPco=\coRE$, then for each Turing machine $M$, there is a restricted universal sentence $\sigma_M$ in the language of tracial von Neumann algebras, computable from the description of $M$, such that:
    \begin{itemize}
        \item If $M$ does not halt, then $\displaystyle \sup_{\cal N}\, \sigma_M^{\cal N}=1$.
        \item If $M$ halts, then $\displaystyle \sup_{\cal N}\, \sigma_M^{\cal N}\le \tfrac12$.
    \end{itemize}
    Here the supremum ranges over all separable tracial von Neumann algebras $\cal N$. 
\end{thm*}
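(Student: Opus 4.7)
The plan is to combine Lin's $\MIPco=\coRE$ with a standard dictionary translating (synchronous) non-local games into restricted universal sentences over tracial von Neumann algebras, in the spirit of the Goldbring--Hart derivation of undecidability of the universal theory of $\R$ from $\MIPstar=\RE$. The sentence $\sigma_M$ will arise from a non-local game $G_M$ produced by Lin's reduction, and its interpretation in any separable tracial $\cal N$ will compute the best value of $G_M$ over commuting-PVM strategies supported on $\cal N$; taking the supremum over $\cal N$ then recovers $\Val_{co}(G_M)$.

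First, apply (the synchronous, gap-preserving form of) Lin's theorem to the halting language: this yields, computably in $M$, a synchronous non-local game $G_M$ with finite question set $[n]$ and answer set $[k]$ such that $\Val_{co}(G_M)=1$ if $M$ does not halt and $\Val_{co}(G_M)\le\tfrac12$ if $M$ halts. In the synchronous setting $\Val_{co}(G_M)$ is realized (or approximated) by tracial strategies, namely commuting families of PVMs $\bar A=\{A_x^a\}$ and $\bar B=\{B_y^b\}$ in some tracial von Neumann algebra $(\cal N,\tau)$ with value $v(\bar A,\bar B):=\sum_{x,y,a,b}\pi(x,y)V(a,b|x,y)\,\tau(A_x^a B_y^b)$.

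Next, encode $G_M$ in the language of tracial von Neumann algebras. Introduce variables $\bar A,\bar B$ ranging over the operator-norm unit ball. The value map $v(\bar A,\bar B)$ is a quantifier-free formula, being a polynomial combination of traces. The constraints that $\bar A,\bar B$ form commuting PVMs (selfadjointness, idempotence, completeness, and commutation) are each measurable by a nonnegative quantifier-free formula in the $\|\cdot\|_2$ norm; collect these into a penalty $P(\bar A,\bar B)\ge 0$ that vanishes exactly on commuting PVMs. For a sufficiently large constant $\lambda$, set $\sigma_M:=\sup_{\bar A,\bar B}\bigl(v(\bar A,\bar B)\dminus \lambda\,P(\bar A,\bar B)\bigr)$. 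This is a restricted universal sentence, computable from the description of $M$, and a routine check gives $\sigma_M^{\cal N} = \sup\{v(\bar A,\bar B):\bar A,\bar B \text{ are commuting PVMs in } \cal N\}$; the supremum over separable $\cal N$ is then $\Val_{co}(G_M)$, giving the dichotomy.

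The main obstacle is the quantitative preservation of the $1$-vs-$\tfrac12$ gap through the penalty step: if $M$ halts, near-optimizers of the penalized formula must stay close to honest strategies (so $v\le\tfrac12+o(1)$ there), while if $M$ does not halt, honest strategies must still approach value $1$ with vanishing penalty. This reduces to a quantitative stability result for approximate PVMs, standard in the non-local-games literature (after Ozawa, Kim--Paulsen--Schafhauser, and the stability lemmas behind $\MIPstar=\RE$); once available, $\lambda$ can be tuned to preserve the gap. A secondary subtlety is that the commuting-operator model a priori allows arbitrary $C^*$-algebras with a state rather than tracial von Neumann algebras; this is handled by the passage through synchronous games, whose commuting-operator values are realized by tracial states.
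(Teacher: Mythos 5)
Your proposal follows the same overall blueprint as the paper (feed Lin's $\MIPco=\coRE$ game through a translation of PVM constraints into a quantifier‑free formula, then quote stability to make the translation robust, then appeal to Goldbring--Hart to pass to restricted sentences). But there is one substantive deviation that opens a real gap, plus a packaging difference worth noting.

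The substantive issue: you encode a strategy as \emph{two} PVM families $\bar A,\bar B$ together with a commutation penalty, and you appeal to ``quantitative stability for approximate PVMs'' to argue that near‑optimizers of the penalized formula are close to honest commuting strategies. The stability results you cite (Kim--Paulsen--Schafhauser, de la Salle) control a \emph{single} approximate PVM family: small $\|e^2-e\|_2$, $\|e-e^*\|_2$, $\|\sum_a e^a-1\|_2$ implies closeness to an exact PVM. They say nothing about turning a pair of PVM families with small $\|[A_x^a,B_y^b]\|_2$ into an exactly commuting pair with a modulus that is uniform across all tracial von Neumann algebras and computable. That ``almost‑commuting implies near‑commuting'' step is a genuinely different (and much more delicate) stability problem, and your proof leans on it without noticing. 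The paper sidesteps it entirely by fully exploiting the synchronous framework it already invokes: for synchronous games, the commuting‑operator value is realized by a \emph{single} PVM family $\bar e$ in a tracial von Neumann algebra with correlation $\tau(e_x^a e_y^b)$ (via Lin/Marrakchi and Paulsen et al.), so the only constraint set one needs to stabilize is $X_{n,m}$, i.e.\ $n$ families of $m$‑outcome PVMs -- no commutation penalty at all. You already flag the synchronous/tracial reduction in your final paragraph; you should push it one step further and drop the second family and the commutation term. Once you do, your penalty $P$ coincides with the paper's $\varphi_{n,m}$ and the needed almost‑near modulus is exactly the paper's Lemma~3.2.

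The packaging difference: the paper treats the PVM constraint set $X_{n,m}$ as a definable set (with computable almost‑near modulus) and writes the inner optimization as a sup over a definable set of a $1$‑Lipschitz quantifier‑free formula, then approximates by restricted universal sentences uniformly in $\cal N$. You instead use a fixed Lagrangian penalty $v\dminus \lambda P$. These are morally the same, and both ultimately rest on the almost‑near modulus, but your claim that ``a routine check gives $\sigma_M^{\cal N}=\sup\{v:\text{honest strategies}\}$'' is not quite right: for any fixed $\lambda$ the penalized sup only approximates the constrained sup up to an error controlled by the stability modulus, not exactly. That slack is harmless here (the $1$ vs.\ $\tfrac12$ gap easily absorbs it, as does the restricted‑sentence approximation in the paper's own proof), but the equality should be stated as an approximation with explicit error, with $\lambda$ chosen, say, as the reciprocal of the modulus at a small enough $\epsilon$.

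In summary: same skeleton as the paper, but switch to the one‑family synchronous encoding to avoid the unaddressed commutation‑stability step, and replace ``equality'' by a controlled approximation when you introduce the penalty.
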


This rules out any approximation property that would allow one to algorithmically approximate from below the value of universal sentences across all tracial von Neumann algebras (as Connes embeddability would have done).  Our proof proceeds by encoding the quantum commuting value of non-local games as universal sentences.  

It has long been known (see \cite[Example~6.4]{farah_model_2014}) that a locally universal tracial von Neumann algebra must exist.  One such algebra, Sherman’s $\mathcal S$ factor, is obtained by a compactness argument.  Being locally universal, $\mathcal S$ is non-Connes embeddable since CEP fails.  From our encoding we deduce:

\begin{thm*}
    The universal theory of any locally universal tracial von Neumann algebra is not computable.
\end{thm*}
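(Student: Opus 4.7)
The plan is to reduce the halting problem to computing universal sentence values in a locally universal tracial von Neumann algebra $\cal M$, using the previous theorem as a black box. First I would show that for any (restricted) universal sentence $\sigma$ and any locally universal $\cal M$,
\[
\sigma^{\cal M} \;=\; \sup_{\cal N}\sigma^{\cal N},
\]
where the supremum ranges over separable tracial von Neumann algebras. The inequality $\le$ is immediate: any finite tuple $\bar a \in \cal M$ generates a separable subalgebra $\cal N_0 \subseteq \cal M$, and the quantifier-free body of $\sigma$ evaluated at $\bar a$ is a continuous function of the $*$-moments of $\bar a$, hence unchanged when passing from $\cal M$ to $\cal N_0$. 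The inequality $\ge$ uses local universality: each separable $\cal N$ embeds into an ultrapower $\cal M^{\cal U}$, universal sentences are monotone under embeddings, and {\L}o\'s' theorem for continuous logic gives $\sigma^{\cal M^{\cal U}} = \sigma^{\cal M}$.

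Once this identity is in hand, the previous theorem transfers verbatim to $\cal M$: for each Turing machine $M$, $\sigma_M^{\cal M}=1$ if $M$ does not halt, and $\sigma_M^{\cal M}\le \tfrac12$ if $M$ halts. Suppose for contradiction that the universal theory of $\cal M$ is computable, meaning there is an algorithm that on input $(\sigma,\epsilon)$, with $\sigma$ a universal sentence and $\epsilon>0$ rational, outputs a rational within $\epsilon$ of $\sigma^{\cal M}$. Given $M$, I would first compute the description of $\sigma_M$ (possible since $M \mapsto \sigma_M$ is computable), then approximate $\sigma_M^{\cal M}$ to within $\tfrac14$, and output ``does not halt'' iff this approximation exceeds $\tfrac34$. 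This yields a decision procedure for the halting problem, a contradiction.

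Since the encoding in the previous theorem is doing all the heavy lifting, the work here is not a substantive obstacle but rather careful matching of conventions. The main point to be pinned down is that the fragment of ``restricted universal sentence'' used to define $\sigma_M$ is exactly the fragment for which ``computable universal theory'' promises approximations, and that the two-sided identity above holds for this precise fragment (in particular, that the domains of quantification appearing in $\sigma_M$ behave well under embeddings into ultrapowers). Both points are standard in the continuous model theory of tracial von Neumann algebras, but they deserve to be stated cleanly to make the reduction to halting rigorous.
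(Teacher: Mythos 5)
Your proof is correct and follows essentially the same route as the paper's: establish $\sigma^{\cal M} = \sup_{\cal N}\sigma^{\cal N}$ from local universality, then use Theorem~\ref{MIPcoEncoding} to reduce the halting problem to approximating $\sigma_M^{\cal M}$. The only cosmetic difference is that you spell out both inequalities of the identity (the paper relies on its earlier discussion of the Farah--Hart--Sherman construction and the observation that all locally universal algebras share a universal theory) and you use a $\tfrac14$-precision/$\tfrac34$-threshold test where the paper asks for an interval of width $<\tfrac18$ and checks whether it contains $1$; both are valid ways to exploit the gap between $1$ and $\tfrac12$.
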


\begin{thm*}
   No locally universal tracial von Neumann algebra admits a computable presentation.
\end{thm*}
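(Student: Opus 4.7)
The plan is to derive this as a direct consequence of the preceding theorem, invoking the standard computable continuous model theory principle that a computable presentation makes the values of universal sentences computably enumerable from below. Suppose for contradiction that a locally universal tracial von Neumann algebra $\cal N$ admits a computable presentation, so that we have a recursive dense sequence $(a_n)$ in the unit ball of $\cal N$ together with algorithms that compute $d(p(a_{i_1},\ldots,a_{i_k}),a_j)$ to arbitrary precision, uniformly in the indices and in the $*$-polynomial $p$.

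First I would verify the c.e.-from-below principle: for any restricted universal sentence $\sigma = \sup_{\bar{x}}\phi(\bar{x})$ with quantifier-free matrix $\phi$, continuity of $\phi$ together with density of $(a_n)$ gives $\sigma^{\cal N} = \sup_{\bar{i}}\phi^{\cal N}(a_{i_1},\ldots,a_{i_k})$, a supremum of uniformly computable reals and hence itself c.e.\ from below, uniformly in $\sigma$. Applied to the sentences $\sigma_M$ from the first theorem, local universality of $\cal N$ combined with preservation of universal sentences under embeddings into ultrapowers yields $\sigma_M^{\cal N} = \sup_{\cal N'}\sigma_M^{\cal N'}$ over separable tracial von Neumann algebras $\cal N'$, so the dichotomy reads $\sigma_M^{\cal N} > 1/2$ iff $M$ does not halt. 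A c.e.-from-below approximation of $\sigma_M^{\cal N}$ then semi-decides this threshold and hence semi-decides non-halting, contradicting the fact that the complement of the halting problem is not recursively enumerable.

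The only mildly subtle point I anticipate is confirming that the c.e.-from-below principle applies cleanly to the restricted sentences in the language of tracial von Neumann algebras used in the paper; this should follow routinely from the definitions in Goldbring--Hart and the subsequent computable operator algebra literature. Once in hand, the corollary is essentially forced by the sharp gap in the first theorem.
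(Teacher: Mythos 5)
Your proof is correct, but it takes a genuinely different route from the paper's. The paper's argument is a one-liner: it invokes the previously established fact that any locally universal $\cal S$ has a weakly effectively enumerable universal theory (from Goldbring--Hart's Theorems 5.2 and 5.6), applies their Proposition 2.7 (computable presentation $+$ weakly effectively enumerable $\Rightarrow$ computable universal theory), and then contradicts Theorem~\ref{SUndecidable}. In effect, the paper's route needs \emph{both} directions of approximability (lower bounds from the presentation, upper bounds from weak effective enumerability) so that the Halting Problem becomes decidable. You instead observe that the presentation alone already yields lower bounds, i.e., $\sigma^{\cal N}$ is c.e.\ from below uniformly in $\sigma$; combined with local universality ($\sigma_M^{\cal N} = \sup_{\cal N'}\sigma_M^{\cal N'}$) and the $\{1\}$ vs.\ $[0,\tfrac12]$ gap from Theorem~\ref{MIPcoEncoding}, this alone semi-decides ``$M$ does not halt,'' which cannot be r.e. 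This buys you a more self-contained argument that bypasses both Proposition 2.7 and the weak effective enumerability of $\cal S$, and it isolates the sharper fact that even one-sided approximation from below is impossible. The one genuine technicality you flag is real: the quantifiers in a universal sentence range over the operator-norm unit ball, whereas a computable presentation only gives direct access to $\|\cdot\|_2$ of generated points, so one must argue (as Goldbring--Hart do when proving Propositions 2.2 and 2.7) that the restricted syntax and the dense generated points suffice to witness the supremum from below; this is routine but not entirely free, and is precisely what the paper's citation to Proposition 2.7 is delegating.
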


The above theorem provides, to our knowledge, the first  examples of separable $\mathrm{II}_1$ factors (or even tracial von Neumann algebras) with \textbf{no} computable presentations. It is easily seen via cardinality considerations that the vast majority of separable $\mathrm{II}_1$ factors do not admit computable presentations.  There are known examples of $\mathrm{II}_1$ factors whose so-called ``standard presentations'' are not computable, but this does not rule out the possibility that they admit some other presentation which is computable. Our methods yield not just a single example, but an entire family: by varying the construction, we obtain  $\mathrm{II}_1$ factors without computable presentations across natural subclasses, including McDuff factors, factors with neither property~$\Gamma$ nor property~(T), and factors with property~(T).  

Using the recent framework for continuous model theory of general von Neumann algebras due to the first author, we also obtain analogous results for locally universal semifinite von Neumann algebras.  In particular, the class of semifinite von Neumann algebras cannot admit approximation properties that apply uniformly to the entire class.  

We obtain analogous results for tracial \cstar-algebras. These results provide strong evidence for a negative solution to the Kirchberg Embedding Problem.  Indeed, the Cuntz algebra $\mathcal O_2$ is known to admit a computable presentation \cite{fox_locally_2024}.  If our methods could be extended to show that locally universal \cstar-algebras never admit computable presentations, it would follow that $\mathcal O_2$ is not locally universal, resolving the Kirchberg Embedding Problem in the negative.  
\subsection*{Acknowledgements}
We would like to thank Isaac Goldbring for providing several helpful comments and corrections on an earlier draft and, in particular, for pointing us to tracial \cstar-algebras, ultimately leading to Section~\ref{sec:4.3}. 
We would also like to thank Junqiao Lin for helpful discussions about $\mathsf{MIP}^{\mathsf{co}}=\mathsf{coRE}$.  Finally, we thank the anonymous referees for helpful comments.
\section{Preliminaries}
We work in the framework of continuous logic \cite{yaacov_model_2008}, specialized to tracial
von Neumann algebras (see \cite{farah_model_2014-1}). See \cite{goldbring_model_2023} for a detailed discussion of continuous model theory in operator algebras.

The language of tracial von Neumann algebras has symbols for the $*$-algebra operations, rational
scalars, and a predicate interpreted as the $2$-norm $\|x\|_2=\tau(x^*x)^{1/2}$; the ambient
metric is is given by $d(x,y) = \|x-y\|_2$. 

Formulas are built from from expressions in the language (e.g., $\|p(\bar x)\|_2$ for $*$-polynomials $p$)
using continuous connectives (max, min, rational scalings, truncated subtraction, etc.) and quantifiers (inf and sup).  With respect to connectives, the standard practice is to take all continuous functions, but, as discussed below, it suffices to take a computable dense subset of these (see \cite[Section 2]{goldbring_universal_2024}).
A \textbf{sentence} is a formula with no free variables; it evaluates as a real number, with $0$
nominally corresponding to truth. The value of a sentence $\sigma$ in a structure $\mathcal{N}$ is denoted by $\sigma^{\mathcal{N}}$. A theory is a collection of sentence. For example, $T_{\mathrm{II}_1}$ is the theory of $\mathrm{II}_1$ factors (see \cite[Proposition 3.4]{farah_model_2014-1}): if all the sentences in this evaluate to $0$ in some structure $\mathcal{N}$, then $\mathcal{N}$ is a $\mathrm{II}_1$ factor.

A \textbf{universal sentence} is one of the form $\sup_{\bar x}\psi(\bar x)$ with $\psi$ quantifier-free. The universal theory of $\mathcal{N}$ is the map sending universal sentences $\sigma$ to $\sigma^\mathcal{N}$. An example of such a sentence is $\sup_x \bigl|\norm{x}_2-\norm{x^*}_2\bigr|$, which, of course, evaluates to $0$ on every tracial von Neumann algebra.

For computability-theoretic reasons, one often works with 
\textbf{restricted universal sentences}, namely universal sentences built from atomic formulas using 
only continuous connectives from a computable dense set which includes truncated subtraction and rational scalings; 
see \cite{goldbring_universal_2024}. Truncated subtraction is:
\[a\dminus b := \max(a-b,0),\]
which can be viewed as a way to test if $a\geq b$.

\begin{definition}
    Let $\cal N$ be a tracial von Neumann algebra. We say that the (universal) theory of $\cal N$ is \textbf{computable} if there is an algorithm which takes as inputs a restricted (universal) sentence $\sigma$ and a rational number $\delta > 0$ and returns rational numbers $a < b$ with $b - a < \delta$ and for which $\sigma^{\cal N} \in (a, b)$.
\end{definition}

We are also interested in the following weaker notion.

\begin{definition}
    The universal theory of a tracial von Neumann algebra $\cal N$ is said to be \textbf{weakly effectively enumerable} if one can effectively enumerate the sentences $\sigma \dminus r := \max\{\sigma - r, 0\}$, where $\sigma$ is a restricted universal sentence, $r \in \bbQ^{>0}$, and $\sigma^{\cal N} \leq r$.
\end{definition}

We also use the notion of computable presentation. Roughly speaking, a computable presentation of a separable tracial von Neumann algebra is a way of 
describing the algebra so that a computer can do all relevant computations. 

Let $\mathcal{N}$ be a separable tracial von Neumann algebra.

\begin{definition}
    \begin{itemize}
        \item A \textbf{presentation} is a pair $\cal N^\# = (\cal N, (a_n)_{n \in \mathbb{N}})$ where $\{ a_n \ : \ n \in \mathbb{N}\}$ is a subset of $\cal N$ that generates a $^*$-algebra which is a $\|\cdot\|_2$-dense subset of $\cal N$.
        \item Elements of the sequence $(a_n)_{n \in \mathbb{N}}$ are called \textbf{special points} of the presentation. 
        \item Elements of the form $p(a_{i_1}, \ldots, a_{i_k})$ where $p$ is a $^*$-polynomial with coefficients in $\mathbb{Q}(i)$ and $a_{i_1}, \ldots, a_{i_k}$ are special points are called \textbf{rational points} of the presentation.
        \item We say that a presentation $\cal N^\#$ is \textbf{computable} if there is an algorithm which, upon the input of a rational point $p$ of $\cal N^\#$ and a natural number $k$, returns a rational number $q$ such that $|\|p\|_2 - q| < 2^{-k}$.
    \end{itemize}    
\end{definition}
Intuitively, this means that $\cal N^\#$ generates $\cal N$ in such a way that it is \textbf{algorithmically possible} to compute the norm of a rational point from a description of how it is generated.  See \cite{goldbring_operator_2021} for more details on computable presentations of tracial von Neumann algebras.

\section{\texorpdfstring{MIP$^{co}$ = coRE}{MIPco = coRE} and its Consequences}

\subsection{Non-local Games}
In this section, we recall the connection between traces on group $C^*$-algebras and strategies for non-local games.  
For background and a more detailed discussion, see \cite[Sections~4--5]{goldbring_connes_2021}.  

\begin{definition}
A \textbf{synchronous non-local game} $\mathfrak{G}$ consists of:
\begin{itemize}
    \item a finite \textbf{question set} $Q$;
    \item a finite \textbf{answer set} $A$;
    \item a probability distribution $\mu$ on $Q \times Q$ (the questions sent to Alice and Bob);
    \item a \textbf{decider function} 
    \[
        D : A^2 \times Q^2 \to \{0,1\}, 
    \]
    where $D(a,b|x,y)$ specifies whether the verifier accepts the answers $a,b$ to questions $x,y$.
\end{itemize}
\end{definition}

A play of $\mathfrak{G}$ proceeds as follows: the verifier samples $(x,y)\sim \mu$,  
sends $x$ to Alice and $y$ to Bob, and receives replies $a,b\in A$.  
The players win if and only if $D(a,b|x,y)=1$. Alice and Bob can decide on a strategy beforehand but are not allowed to communicate during the actual game. 
Thus a \textbf{strategy} is encoded by conditional probabilities
\[
    \big(p(a,b|x,y)\big)_{a,b,x,y}.
\]

To analyze synchronous non-local games algebraically, consider the universal $C^*$-algebra $\mathcal A_{Q,A}$ generated by projections
\[
   \{\, e_x^a \ : \ x\in Q, \ a\in A \,\}
\]
subject to the relations
\[
    e_x^a e_x^b = \delta_{ab} e_x^a, 
    \qquad \sum_{a\in A} e_x^a = 1 \quad \text{for each } x\in Q.
\]
Families $\{e_x^a\}$ satisfying the above conditions are called projection-valued measures (PVMs). One may think of $e_x^a$ as the “quantum probability’’ of answering $a$ on question $x$.

Given a tracial state $\tau$ on $\mathcal A_{Q,A}$, we obtain a strategy
\[
    p(a,b|x,y)\ :=\ \tau(e_x^a e_y^b).
\]
Such a strategy is called a \textbf{synchronous quantum commuting strategy}.
This definition is equivalent to the standard definition by \cite[Proposition 5.6]{paulsen_estimating_2016}.

Let $\omega^s_{\mathrm{co}}(\mathfrak{G})$ denote the best winning probability of $\mathfrak{G}$ over all synchronous quantum commuting strategies.  
In \cite{lin_mipco_2025}, Lin announced a proof of the following:

\begin{fact}
    For each Turing machine $M$, there is a non-local game $\mathfrak{G}_M$ computable from the description of $M$ such that:
    \begin{itemize}
        \item If $M$ does not halt, then $\omega^s_{\mathrm{co}}(\mathfrak{G}_M)=1$;
        \item If $M$ halts, then $\omega^s_{\mathrm{co}}(\mathfrak{G}_M)\leq \tfrac12$.
    \end{itemize}
    In other words, $\mathsf{MIP}^\mathsf{co}=\mathsf{coRE}$.
\end{fact}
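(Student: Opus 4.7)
The plan is to reduce the halting problem to distinguishing $\omega^{\mathrm{co}}(\mathfrak G)=1$ from $\omega^{\mathrm{co}}(\mathfrak G)\leq \tfrac12$: given a Turing machine $M$, I want to produce, computably in $M$, a non-local game $\mathfrak G_M$ such that perfect synchronous commuting strategies exist precisely when $M$ runs forever. This is the hard direction of $\MIPco=\coRE$; the easy direction $\MIPco\subseteq \coRE$ would be handled separately via the Navascu\'es--Pironio--Ac\'in hierarchy, which produces computable decreasing upper bounds on $\omega^{\mathrm{co}}$ that converge to it.

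My blueprint would be the Ji--Natarajan--Vidick--Wright--Yuen proof of $\MIPstar = \RE$, reinterpreted in the commuting operator model. First, I would assemble the standard toolkit of certified gadgets: introspection games in which the verifier outsources the sampling of the next layer of questions to the players (cross-checking them classically), oracularized low-degree tests certifying that the players encode polynomials faithfully, and self-testing subgames pinning down the joint algebra generated by the players' observables. Each gadget must be shown to be sound against \textit{all} synchronous commuting strategies. Second, I would compose these gadgets into a polynomial-time compression transformation $\mathrm{Comp}$ on non-local games satisfying roughly $\omega^{\mathrm{co}}(\mathrm{Comp}(\mathfrak G))\approx \omega^{\mathrm{co}}\bigl(\mathfrak G^{\otimes T(|\mathfrak G|)}\bigr)$ for some fast-growing $T$, preserving the $(1,\tfrac12)$ gap. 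A recursion-theoretic fixed point applied to $\mathrm{Comp}$ then produces the family $\mathfrak G_M$: if $M$ halts within some finite time bound, unravelling the compression yields a finite honest strategy witnessing $\omega^{\mathrm{co}}(\mathfrak G_M)\leq \tfrac12$, while if $M$ never halts one inductively assembles perfect synchronous commuting strategies at every level of compression, giving $\omega^{\mathrm{co}}(\mathfrak G_M)=1$.

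The main obstacle, and where I expect the bulk of the technical work to lie, is soundness of the compression pipeline against commuting operator strategies. In the finite-dimensional quantum setting, the rigidity theorems underpinning $\MIPstar=\RE$ rely on finite-dimensional tools (Schmidt decomposition, operator-norm spectral rounding, dimension-dependent stability bounds). In the commuting framework, by \cite[Theorem~1.2]{lin_tracial_2023} or \cite[Corollary~3.2]{marrakchi_almost_2023}, synchronous strategies correspond to tracial states on $\mathcal A_{Q,A}$, and so every soundness argument must be carried out inside arbitrary tracial von Neumann algebras, which may be non-hyperfinite or lack any finite-dimensional approximation. The delicate task is to establish Gowers--Hatami-style stability for approximate group representations and for approximate low-degree polynomials in this fully tracial setting, so that near-perfect commuting strategies can be rounded to honest ones without closing the $(1,\tfrac12)$ gap. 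Successfully transplanting the compression analysis into the tracial von Neumann algebra setting, without any dimension-dependent shortcut, is what I expect Lin's substantive new contribution to be.
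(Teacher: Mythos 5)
The paper does not prove this statement: it is labelled a \emph{Fact}, attributed entirely to Lin's forthcoming work \cite{lin_mipco_2025}, and no argument is given. There is therefore no proof in the paper against which your sketch can be checked; the authors simply import the result as a black box and build on it.

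That said, a few remarks on your sketch on its own terms. You correctly identify the two halves of the claim (the easy containment $\MIPco\subseteq\coRE$ via the NPA hierarchy, and the hard containment via a reduction from non-halting), and you correctly locate the central obstacle: the JNVWY compression pipeline (introspection, low-degree testing, self-testing, recursive compression) is proved sound against \emph{finite-dimensional} strategies, using Schmidt decompositions, operator-norm spectral rounding and dimension-dependent stability, none of which transfer automatically to commuting-operator strategies, which (by the synchronous correspondence you cite) live in arbitrary tracial von Neumann algebras. This is indeed the crux, and it is honest of you to flag it as the part you are deferring to Lin rather than supplying. One framing slip worth fixing: in the halting case you write that unravelling the compression ``yields a finite honest strategy witnessing $\omega^{\mathrm{co}}(\mathfrak G_M)\le\tfrac12$.'' Exhibiting a strategy can only lower-bound the value; the halting case requires a \emph{soundness} argument that \emph{no} commuting strategy exceeds $\tfrac12$. (Compare $\MIPstar=\RE$, where it is the halting case that gets an honest near-perfect strategy and the non-halting case that needs soundness; the roles flip when you pass to $\coRE$.) Beyond that, since the paper gives no proof and Lin's paper had not yet appeared, I cannot assess whether your compression-based roadmap matches Lin's actual route or whether Lin uses a structurally different argument; the most that can be said is that your sketch is a plausible blueprint, not a proof, and you yourself acknowledge that the substantive content would lie in the stability and rigidity lemmas you leave unproved.
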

Note that above we are able to use the synchronous value by \cite[theorem 1.2]{lin_tracial_2023} or \cite[Corollary 3.2]{marrakchi_almost_2023}.

\subsection{Universal Sentences for Game Values}
We now explain how to convert the synchronous quantum commuting value of a synchronous non-local game into the maximum value of a universal sentence in the language of tracial von Neumann algebras. This section follows a lot of ideas from \cite[Section 3]{goldbring_universal_2024}.

Fix $n,m\ge 2$.  For each question $x\in\{1,\dots,n\}$ and each answer $a\in\{1,\dots,m\}$, let $e_x^a$ range over the unit ball of a tracial von Neumann algebra $(\cal N,\tau)$, with $\|y\|_2:=\tau(y^*y)^{1/2}$.  
Set
\[
\varphi_{n,m}(\bar e)\ :=\
\max\Big(
\max_{x,a}\|{e_x^a} - ({e_x^a})^*\|_2,\;
\max_{x,a}\|({e_x^a})^2 - e_x^a\|_2,\;
\max_{x}\Big\|\sum_{a=1}^m e_x^a - 1\Big\|_2
\Big),
\]
where $\bar e=(e_x^a)_{x,a}$.

Define
\[
X_{n,m}^{\cal N}:= \cal Z_{\cal N}(\varphi_{n,m}) = \{\bar e\in \cal N^{nm}:\varphi_{n,m}(\bar e)=0\},
\]
so that $X_{n,m}^{\cal N}$ is the set of $n$-tuples of $m$-outcome PVMs in $\cal N$.  As noted in the display, $X_{n,m}^{\cal N}$ is by definition the zeroset of $\varphi_{n,m}$ evaluated in $\cal N$.  For any choice of $n, m$, we see that $\varphi_{n,m}$ is a computable formula in the language of tracial von Neumann algebras.

In classical first-order logic, the zeroset of a formula is automatically definable.  
In continuous model theory, however, definability requires an additional stability condition: if $\varphi_{n,m}(\bar e)$ is small, then $\bar e$ must be close (in the metric) to $X_{n,m}^{\cal N}$.  
This is precisely the “almost-near’’ property of continuous logic definable sets.  We need this stability condition to be uniform in $\cal N$ as below.

\begin{lem}\label{AlmostNear}
    For any $m, n \geq 2$, the assignment $\cal N \mapsto X_{n,m}^{\cal N}$ is a definable set relative to the theory of tracial von Neumann algebras.  In other words, for every $\epsilon > 0$, there exists $\delta(\epsilon) > 0$ such that for all tracial von Neumann algebras $(\cal N, \tau)$ and $\overline{a} \in \cal N$, if $\varphi_{n,m}(\overline{a}) < \delta$ then $d(\overline{a}, X_{n,m}^{\cal N}) \leq \epsilon$.

    Furthermore, $\delta(\epsilon)$ is a computable function.
\end{lem}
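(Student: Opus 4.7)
The plan is to prove Lemma \ref{AlmostNear} constructively, via three successive functional-calculus corrections performed inside $\cal N$ itself. From any tuple $\bar a \in \cal N^{nm}$ with $\varphi_{n,m}(\bar a) < \delta$ I will produce an honest PVM tuple $\bar q \in X_{n,m}^{\cal N}$ within sup-$2$-norm $\epsilon$ of $\bar a$. Since each step uses continuous functional calculus with explicit polynomials or characteristic functions whose $2$-norm moduli of continuity follow from operator-norm bounds combined with the tracial inequality $\|\cdot\|_1 \leq \|\cdot\|_2$, the resulting $\delta(\epsilon)$ comes out in the form $c\epsilon^k$ for explicit rational $c,k$, hence is computable.

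\emph{Step 1 (self-adjoint correction)}: set $b_x^a := \tfrac12(a_x^a + (a_x^a)^*)$. Since $a_x^a$ is a contraction, $b_x^a$ is a self-adjoint contraction with $\|b_x^a - a_x^a\|_2 \leq \delta/2$, and a short telescoping estimate using $\|(b_x^a)^2 - (a_x^a)^2\|_2 \leq 2\|b_x^a - a_x^a\|_2$ yields $\|(b_x^a)^2 - b_x^a\|_2 \leq \tfrac52\,\delta$. \emph{Step 2 (spectral cutoff)}: define the projection $p_x^a := \chi_{[1/2,1]}(b_x^a) \in \cal N$ via continuous functional calculus. The elementary pointwise inequality $|t - \chi_{[1/2,1]}(t)|^2 \leq |t^2 - t|$ for $t \in [-1,1]$ gives $\|p_x^a - b_x^a\|_2 \leq \|(b_x^a)^2 - b_x^a\|_2^{1/2} \leq \sqrt{5\delta/2}$, so each $p_x^a$ is an honest projection in $\cal N$ within $O(\sqrt\delta)$ of $a_x^a$, and $\bigl\|\sum_a p_x^a - 1\bigr\|_2 = O(\sqrt\delta)$.

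\emph{Step 3 (PVM adjustment)} is the main obstacle: for each fixed $x$ I must replace the projections $(p_x^1,\dots,p_x^m)$, which approximately sum to $1$ but need not be pairwise orthogonal, by an honest PVM $(q_x^1,\dots,q_x^m) \in \cal N$ close in $2$-norm. My plan is to introduce the self-adjoint \emph{question operator} $Q_x := \sum_{a=1}^m a \cdot p_x^a \in \cal N$. If the $p_x^a$ were an exact PVM then $Q_x$ would satisfy $\prod_{a=1}^m (Q_x - a) = 0$ with spectrum $\{1,\dots,m\}$ and $p_x^a = \chi_{\{a\}}(Q_x)$. In the approximate setting, direct expansion of $\prod_a(Q_x - a)$ together with the identity $\sum_{a\neq b} p_x^a p_x^b = (\sum_a p_x^a)^2 - \sum_a p_x^a$ gives $\bigl\|\prod_a(Q_x - a)\bigr\|_2 = O(\delta^{1/2})$; continuous functional calculus then yields pairwise orthogonal projections $q_x^a := \chi_{[a-1/2,\,a+1/2]}(Q_x) \in \cal N$, and a spectral-measure argument, using that $\prod_a(t-a)^2$ is bounded below by a positive constant off $\bigcup_a [a-1/2, a+1/2]$, controls both $\bigl\|\sum_a q_x^a - 1\bigr\|_2$ and $\|q_x^a - p_x^a\|_2$. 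Absorbing the defect $1 - \sum_a q_x^a$ (which is orthogonal to every $q_x^a$ by spectral calculus) into $q_x^1$ produces a genuine PVM. All constructions live in $\cal N$ because continuous functional calculus preserves $\cal N$, and all perturbation estimates are uniform across tracial von Neumann algebras and computable from rational-coefficient approximants to the characteristic functions involved, as required.
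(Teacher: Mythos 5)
Your proposal is correct in spirit and takes a genuinely different route from the paper. The paper's proof of Lemma~\ref{AlmostNear} is simply a citation to \cite{de_la_salle_orthogonalization_2022}, with \cite[Lemma~3.5]{kim_synchronous_2018} offered as an alternative (noting it transfers verbatim from $M_d(\mathbb{C})$ to general tracial von Neumann algebras). Both of those references proceed by an orthogonalization (Gram--Schmidt-like) scheme. Your construction is self-contained and replaces the orthogonalization step with a spectral analysis of the single ``question operator'' $Q_x = \sum_a a\,p_x^a$, which is a cleaner and arguably more conceptual device. Steps~1 and~2 are fully correct: the pointwise inequality $|t-\chi_{[1/2,1]}(t)|^2 \le |t^2-t|$ together with $\|\cdot\|_1 \le \|\cdot\|_2$ does give $\|p_x^a - b_x^a\|_2 \le \sqrt{5\delta/2}$. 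What each approach buys: yours produces an explicit, readable modulus $\delta(\epsilon)$ without appeal to outside stability theorems; the paper's buys brevity and a sharper modulus.

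There is, however, a real gap in Step~3. The spectral-measure argument you describe (lower-bounding $\prod_a(t-a)^2$ off $\bigcup_a[a-\tfrac12,a+\tfrac12]$, applied to $\|\prod_a(Q_x - a)\|_2^2$) does control $\|\sum_a q_x^a - 1\|_2$, but it does \emph{not} by itself control $\|q_x^a - p_x^a\|_2$. Knowing the spectral measure of $Q_x$ is concentrated near $\{1,\dots,m\}$ tells you about the spectral projections $q_x^a$ of $Q_x$ in isolation; it does not relate $q_x^a$ back to the individual projection $p_x^a$ you started from. You need an extra bridge: for instance, show that $p_x^a \approx f_a(Q_x)$ in $\|\cdot\|_2$, where $f_a(t) = \prod_{b\neq a}\frac{t-b}{a-b}$ is the Lagrange interpolation polynomial (this follows from the same expansion and the near-orthogonality of the $p_x^b$), and then compare $f_a(Q_x)$ with $q_x^a = \chi_{[a-1/2,\,a+1/2)}(Q_x)$ via the spectral measure, using that $|f_a - \chi_{[a-1/2,\,a+1/2)}|$ is $O(d(t,\{1,\dots,m\}))$ on the bounded spectral range of $Q_x$. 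Two smaller issues: the intervals $[a-\tfrac12, a+\tfrac12]$ overlap at endpoints, so they should be half-open to guarantee pairwise orthogonality of the $q_x^a$; and for $m \geq 3$ your bound $\|\prod_a(Q_x - a)\|_2 = O(\delta^{1/2})$ is optimistic --- the cross-terms $p_x^a p_x^b$ are only $O(\delta^{1/4})$ in $\|\cdot\|_2$ (since $\|p_x^a p_x^b\|_2^2 = \tau(p_x^a p_x^b)$ and $\sum_{a\neq b}\tau(p_x^a p_x^b) = \tau(P^2 - P) = O(\delta^{1/2})$), so you should expect $O(\delta^{1/4})$. None of this breaks the structure --- a polynomial modulus of any exponent suffices for computability of $\delta(\epsilon)$ --- but the $q_x^a$-vs.-$p_x^a$ comparison needs an actual argument rather than an appeal to the same spectral bound.
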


\begin{proof}
This is the main result of \cite{de_la_salle_orthogonalization_2022}. Alternatively, this is essentially the stability result of \cite[Lemma~3.5]{kim_synchronous_2018}.  
While the statement in the latter is proved for $\cal N=M_d(\mathbb{C})$, the argument carries over verbatim to arbitrary tracial von Neumann algebras.  
\end{proof}

Given a synchronous non-local game $\mathfrak{G}=(Q,A,\mu,D)$ with $|Q|=n$ and $|A|=m$, and a tuple
$\bar e=(e_x^a)_{x\in Q,\ a\in A}\in X_{n,m}^{\cal N}$ inside a tracial von Neumann algebra $(\cal N,\tau)$, define
\[
  \psi_{\mathfrak{G}}(\bar e)\ :=\ \sum_{x,y\in Q}\mu(x,y)\ \sum_{a,b\in A} D(a,b|x,y)\ \tau\!\big(e_x^a\,e_y^b\big).
\]

\begin{lem}\label{lem:qc-as-def-sup}
For a synchronous non-local game $\mathfrak{G}$, the commuting-operator value satisfies
\[
  \omega^s_{\mathrm{co}}(\mathfrak{G})
  \;=\;
  \sup_{(\cal N,\tau)}\ \sup_{\bar e\in X_{n,m}^{\cal N}}\ \psi_{\mathfrak{G}}(\bar e),
\]
where the outer supremum ranges over all tracial von Neumann algebras $(\cal N,\tau)$ and
$X_{n,m}^{\cal N}$ denotes the set of $n$ families of $m$-outcome PVMs in $\cal N$.
\end{lem}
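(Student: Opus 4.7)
The identity is essentially a reformulation, so the plan is to set up a dictionary between tuples $\bar e \in X_{n,m}^{\cal N}$ living inside a tracial von Neumann algebra and synchronous quantum commuting strategies for $\mathfrak{G}$, and to check that $\psi_{\mathfrak{G}}(\bar e)$ is exactly the winning probability of the associated strategy.

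For the inequality $\sup_{(\cal N,\tau)} \sup_{\bar e} \psi_{\mathfrak{G}}(\bar e) \le \omega^{\mathrm{co}}(\mathfrak{G})$, I would fix any tracial von Neumann algebra $(\cal N,\tau)$ and any $\bar e \in X_{n,m}^{\cal N}$. By definition of $X_{n,m}^{\cal N}$, the family $\{e_x^a\}$ is an honest system of PVMs in $\cal N$. The universal property of $\mathcal A_{Q,A}$ then furnishes a unital $*$-homomorphism $\pi:\mathcal A_{Q,A}\to \cal N$ with $\pi(e_x^a)=e_x^a$, and $\tau\circ\pi$ is a tracial state on $\mathcal A_{Q,A}$. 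The induced synchronous quantum commuting strategy is $p(a,b|x,y) = \tau(e_x^a e_y^b)$, and its winning probability is precisely
\[
\sum_{x,y}\mu(x,y)\sum_{a,b} D(a,b|x,y)\,\tau(e_x^a e_y^b) = \psi_{\mathfrak{G}}(\bar e).
\]
Since this quantity is bounded above by $\omega^{\mathrm{co}}(\mathfrak{G})$, the desired inequality follows after taking suprema.

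For the reverse inequality, I would start from a synchronous quantum commuting strategy achieving winning probability within $\epsilon$ of $\omega^{\mathrm{co}}(\mathfrak{G})$. By the characterization cited just before the lemma (namely \cite[Proposition 5.6]{paulsen_estimating_2016} together with the synchronous-value theorems of \cite{lin_tracial_2023} and \cite{marrakchi_almost_2023}), every such strategy is of the form $p(a,b|x,y) = \tau(e_x^a e_y^b)$ for some tracial state $\tau$ on $\mathcal A_{Q,A}$. The GNS construction applied to $(\mathcal A_{Q,A},\tau)$ produces a cyclic representation $\pi$, and the von Neumann algebra $\cal N := \pi(\mathcal A_{Q,A})''$ carries a faithful normal tracial state $\bar\tau$ extending $\tau$. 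The images $\bar e := (\pi(e_x^a))_{x,a}$ remain genuine PVMs, so $\bar e \in X_{n,m}^{\cal N}$, and by normality $\psi_{\mathfrak{G}}(\bar e)$ equals the original winning probability, which is at least $\omega^{\mathrm{co}}(\mathfrak{G})-\epsilon$. Letting $\epsilon\to 0$ completes the argument.

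There is no real obstacle here; the only point that needs a moment of care is verifying that the strategy is indeed realized inside a tracial von Neumann algebra (rather than merely a tracial \cstar{}-algebra), which is why I would explicitly invoke GNS and pass to the bicommutant. Everything else is a direct unpacking of definitions and the universal property of $\mathcal A_{Q,A}$.
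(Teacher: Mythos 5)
Your proposal is correct and follows essentially the same approach as the paper: both directions unpack the correspondence between PVM tuples in tracial von Neumann algebras and tracial states on $\mathcal A_{Q,A}$, using the universal property for one direction and GNS for the other. The only minor difference is that the paper invokes compactness of the trace simplex of $\mathcal A_{Q,A}$ to attain $\omega^{\mathrm{co}}(\mathfrak{G})$ exactly at some trace and applies GNS once, whereas you take an $\epsilon$-near-optimal strategy, pass to its GNS completion, and let $\epsilon \to 0$; both variants are fine and yield the same conclusion.
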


\noindent
Since $X_{n,m}^{\cal N}$ is definable and $\psi_{\mathfrak{G}}$ is a quantifier-free formula which is $1$-Lipshitz, the inner supremum 
$\sup_{\bar e\in X_{n,m}^{\cal N}}\psi_{\mathfrak{G}}(\bar e)$ is the value of a universal sentence evaluated in $\cal N$.  Since $\psi_{\mathfrak{G}}$ is $1$-Lipshitz, \cite[Proposition 2.2]{goldbring_universal_2024} implies that this universal sentence can be approximated by restricted universal formulae, and furthermore, this approximation by restricted formulae is independent of choice of $\cal N$.

\begin{proof}
($\geq$) Suppose $\bar e\in X_{n,m}^{\cal N}$ is a PVM in a tracial von Neumann algebra $(\cal N,\tau)$.  
By universality, there is a $*$-homomorphism $\pi:\mathcal A_{Q,A}\to \cal N$ sending the generating PVM in $\mathcal A_{Q,A}$ to $\bar e$.  
Pulling back the trace, $\tau\circ\pi$ is a tracial state on $\mathcal A_{Q,A}$.  
The corresponding correlation $p(a,b|x,y)=\tau(e_x^a e_y^b)$
is therefore a synchronous quantum commuting strategy, and its value is precisely $\psi_{\mathfrak{G}}(\bar e)$.  
This shows that every $\bar e$ contributes to $\omega^s_{\mathrm{co}}(\mathfrak{G})$, hence
\[
\omega^s_{\mathrm{co}}(\mathfrak{G}) \;\geq\; \sup_{(\cal N,\tau)}\ \sup_{\bar e\in X_{n,m}^{\cal N}}\ \psi_{\mathfrak{G}}(\bar e).
\]

($\leq$) Conversely, let $p$ be any synchronous quantum commuting strategy for $\mathfrak{G}$.  
By definition, there exists a tracial state $\tau$ on $\mathcal A_{Q,A}$ such that 
$p(a,b|x,y)=\tau(e_x^a e_y^b)$.  
Since the trace simplex of $\mathcal A_{Q,A}$ is compact convex, the value $\omega^s_{\mathrm{co}}(\mathfrak{G})$ is attained at some trace. The GNS of this is a tracial von Neumann algebra and the generating PVM of $\mathcal{A}_{Q,A}$ gets sent to a PVM in this algebra.
Thus
\[
\omega^s_{\mathrm{co}}(\mathfrak{G}) \;\leq\; \sup_{(\cal N,\tau)}\ \sup_{\bar e\in X_{n,m}^{\cal N}}\ \psi_{\mathfrak{G}}(\bar e)
\]
as desired.
\end{proof}
We can now prove the main theorem:

\begin{thm}\label{MIPcoEncoding}
For each Turing machine $M$, there is a restricted universal sentence $\sigma_M$ in the language of tracial von Neumann algebras, computable from the description of $M$, such that:
\begin{itemize}
    \item If $M$ does not halt, then $\displaystyle \sup_{\cal N}\, \sigma_M^{\cal N}=1$.
    \item If $M$ halts, then $\displaystyle \sup_{\cal N}\, \sigma_M^{\cal N}\le \tfrac12$.
\end{itemize}
Here the supremum ranges over all tracial von Neumann algebras $\cal N$. 
In other words, there is a computable reduction from the non-halting problem to the evaluation of (restricted) universal sentences.
\end{thm}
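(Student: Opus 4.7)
The plan is to assemble Lin's Fact with Lemma \ref{lem:qc-as-def-sup} (which encodes $\omega^{\mathrm{co}}$ as a double supremum over tracial von Neumann algebras and PVM-tuples) and the uniform computable definability of $X_{n,m}^{\cal N}$ from Lemma \ref{AlmostNear}. Given a Turing machine $M$, I first apply Lin's Fact to obtain a non-local game $\mathfrak{G}_M$, computable from $M$, satisfying $\omega^{\mathrm{co}}(\mathfrak{G}_M) = 1$ when $M$ does not halt and $\omega^{\mathrm{co}}(\mathfrak{G}_M) \le 1/2$ when $M$ halts. Writing $n = |Q|$ and $m = |A|$, I form the quantifier-free formulas $\varphi_{n,m}$ and $\psi_{\mathfrak{G}_M}$; both are expressible with rational coefficients in the language of tracial von Neumann algebras via polarization of the $2$-norm, so both are computable from $M$.

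The core step is to eliminate the constraint $\bar e \in X_{n,m}^{\cal N}$ by absorbing it into the body of a single universal sentence. Using the uniform computable modulus $\delta(\cdot)$ from Lemma \ref{AlmostNear} together with the $1$-Lipschitzness of $\psi_{\mathfrak{G}_M}$, I pick a computable continuous penalty $F$ with $F(0) = 0$ that grows quickly enough to swamp $\psi_{\mathfrak{G}_M}$ whenever $\varphi_{n,m}(\bar e)$ is nontrivial. The candidate universal sentence
\[
\tau_M \;:=\; \sup_{\bar e}\,\bigl(\psi_{\mathfrak{G}_M}(\bar e) \dminus F(\varphi_{n,m}(\bar e))\bigr)
\]
then satisfies $\tau_M^{\cal N} = \sup_{\bar e \in X_{n,m}^{\cal N}} \psi_{\mathfrak{G}_M}(\bar e)$ uniformly over $\cal N$, and consequently $\sup_{\cal N} \tau_M^{\cal N} = \omega^{\mathrm{co}}(\mathfrak{G}_M)$ by Lemma \ref{lem:qc-as-def-sup}. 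Applying \cite[Proposition 2.2]{goldbring_universal_2024} (together with a final truncation $\min(\cdot, 1)$, itself a restricted connective) yields the desired restricted universal sentence $\sigma_M$ whose value at each $\cal N$ is within any prescribed rational tolerance of $\tau_M^{\cal N}$, uniformly in $\cal N$. Choosing the tolerance small enough (and, if necessary, applying a standard parallel-repetition-style gap amplification to $\mathfrak{G}_M$ to provide slack) preserves the $1$ versus $1/2$ dichotomy. The full chain $M \mapsto \mathfrak{G}_M \mapsto (\psi_{\mathfrak{G}_M}, \varphi_{n,m}) \mapsto \tau_M \mapsto \sigma_M$ is effective because each constituent step is.

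The main obstacle is the uniform-in-$\cal N$ construction of $\tau_M$: the penalty $F$ must be chosen once and for all so that the identity $\tau_M^{\cal N} = \sup_{\bar e \in X_{n,m}^{\cal N}} \psi_{\mathfrak{G}_M}(\bar e)$ holds simultaneously in every tracial von Neumann algebra. This is precisely what the uniformity in Lemma \ref{AlmostNear} (resting on de la Salle's orthogonalization theorem) supplies; without a modulus that is independent of $\cal N$, no single universal sentence could encode the inner supremum across all tracial von Neumann algebras. A secondary, bookkeeping issue is verifying that the passage from $\tau_M$ to its restricted approximation $\sigma_M$ is both computable and preserves the gap, which is routine given the effectivity of the modulus and \cite[Proposition 2.2]{goldbring_universal_2024}.
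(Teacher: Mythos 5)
Your proposal is correct and follows essentially the same route as the paper: combine Lin's $\MIPco = \coRE$ game $\mathfrak{G}_M$ with Lemma~\ref{lem:qc-as-def-sup} and then pass to a restricted sentence via \cite[Proposition 2.2]{goldbring_universal_2024}. The one place where you do more work than the written proof is the explicit penalty-function construction $\tau_M := \sup_{\bar e}\bigl(\psi_{\mathfrak{G}_M}(\bar e)\dminus F(\varphi_{n,m}(\bar e))\bigr)$; the paper simply invokes (in the remark after Lemma~\ref{lem:qc-as-def-sup}) that definability of $X_{n,m}^{\cal N}$ plus $1$-Lipschitzness of $\psi_{\mathfrak{G}_M}$ makes the constrained inner supremum the value of a universal sentence, which is exactly what your $F$ (an inverse of the computable almost-near modulus $\delta$) implements. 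Two small remarks: your description of $F$ "swamping" $\psi$ is slightly misleading --- $F$ need only dominate the inverse modulus so that $\psi(\bar e)\dminus F(\varphi_{n,m}(\bar e))\le \sup_{X_{n,m}^{\cal N}}\psi$ for all $\bar e$, not annihilate $\psi$ off the zero set; and the aside about parallel-repetition gap amplification is unnecessary --- passing to a restricted sentence introduces a uniform error $\eta$, and choosing $\eta$ small enough (say $\eta<1/8$) already keeps the two cases separated, which is all the downstream undecidability arguments require.
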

\begin{proof}
Fix a Turing machine $M$. By \cite{lin_mipco_2025}, there is a synchronous non-local game $\mathfrak{G}_M$ computable from $M$ such that 
$\omega^s_{\mathrm{co}}(\mathfrak{G}_M)=1$ if $M$ does not halt, and 
$\omega^s_{\mathrm{co}}(\mathfrak{G}_M)\le \tfrac12$ if $M$ halts.

By Lemma~\ref{lem:qc-as-def-sup}, there is a universal sentence $\varphi_M$ in the language of tracial von Neumann algebras with
\[
\sup_{\cal N} \varphi_M^{\cal N} \;=\; \omega^s_{\mathrm{co}}(\mathfrak{G}_M).
\]
In particular, if $M$ does not halt then $\sup_{\cal N} \varphi_M^{\cal N}=1$, while if $M$ halts then $\sup_{\cal N} \varphi_M^{\cal N}\leq \tfrac12$.

Finally, by Goldbring–Hart \cite[Prop.~2.2]{goldbring_universal_2024}, every universal sentence can be approximated by restricted universal sentences uniformly in $\mathcal{N}$, proving the result.
\end{proof}
\section{Applications to Computable Continuous Model Theory}

In this section, we present several interesting model-theoretic consequences of Theorem \ref{MIPcoEncoding}.  In \cite{goldbring_universal_2024}, Goldbring--Hart used a variant of Theorem \ref{MIPcoEncoding} that arose from $\mathsf{MIP}^*=\mathsf{RE}$ to prove parallel results for $\R$. 

\subsection{Locally Universal Models}

We begin by recalling some essential background from model theory.

\begin{definition}
    A tracial von Neumann algebra $\mathcal{S}$ is called \textbf{locally universal} if every separable tracial von Neumann algebra embeds in an ultrapower of $\mathcal{S}$.
\end{definition}

The main theorem of \cite{ji_mip_2020} shows that the hyperfinite II$_1$ factor $\R$ is not locally universal, thereby settling CEP in the negative.  On the other hand, Farah--Hart--Sherman build a separable locally universal tracial von Neumann algebra $\cal S$ in \cite{farah_model_2014}, thereby giving a ``poor man's resolution of the Connes Embedding Problem''.  Note that since CEP is resolved in the negative, $\cal S$ does not embed in any ultrapower of $\R$.  

We briefly sketch the construction of $\cal S$.  Taking tensor products or free products over representatives of all isomorphism classes of separable $\mathrm{II}_1$ factors, one obtains a very large (non-separable) $\mathrm{II}_1$ factor $\cal S_{0}$ containing (copies of) every $\mathrm{II}_1$ factor.  In particular, for every universal sentence $\sigma$ one has
$\sigma^{\cal S_{0}} = \sup_{\cal N} \sigma^{\cal N},$
where the supremum ranges over all separable $\mathrm{II}_1$ factors $\cal N$.  By the downward Löwenheim--Skolem theorem, there exists a separable elementary submodel $\mathcal{S}\subset \mathcal{S}_0$, meaning that $\mathcal{S}$ and $\mathcal{S}_0$ agree on the truth values of all sentences (in particular, of all universal sentences).  This means that, for each separable $\mathrm{II}_1$ factor $\mathcal{N}$ and universal sentence $\sigma$, we have $\sigma^{\mathcal{N}}\leq \sigma^{\mathcal{S}}$. This implies $\mathcal{N}$ embeds into an ultrapower of $\mathcal{S}$. Finally since every tracial von Neumann algebra embeds in a $\mathrm{II}_1$ factor (for example, by considering its free product with $\cal R$ \cite[theorem 3.4]{ueda_factoriality_2011}), we get that $\mathcal{S}$ is locally universal.

We denote by $\cal S$ the factor described by the construction above. Note that it has the same universal theory as any other locally universal tracial von Neumann algebra, so facts about its universal theory apply to every locally universal tracial von Neumann algebra.

\subsection{Results on Computability of Universal Theories}
It is already known that $\cal S$ has a weakly effectively enumerable universal theory (combining  \cite[Theorem 5.2 and 5.6]{goldbring_universal_2024}) In contrast, we now have our main theorem of this section.

\begin{thm}\label{SUndecidable}
    The universal theory any locally universal tracial von Neumann algebra is not computable.
\end{thm}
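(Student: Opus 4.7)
The plan is to derive a contradiction from the assumed computability of the universal theory by reducing the non-halting problem to it, with Theorem~\ref{MIPcoEncoding} playing the role of the bridge.

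The first step would be to establish, for any locally universal tracial von Neumann algebra $\mathcal{S}'$ and any universal sentence $\sigma$, the identity
\[
\sigma^{\mathcal{S}'} \;=\; \sup_{\mathcal{N}} \sigma^{\mathcal{N}},
\]
where $\mathcal{N}$ ranges over all (separable) tracial von Neumann algebras. For the $\geq$ direction, I would note that any tuple $\bar a \in \mathcal{S}'$ generates a separable tracial von Neumann subalgebra $\mathcal{N}_{\bar a} \subseteq \mathcal{S}'$ on which the quantifier-free kernel $\psi(\bar a)$ takes the same value as in $\mathcal{S}'$, since $\psi$ depends only on $^*$-polynomials in $\bar a$ and the restricted $2$-norm. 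For $\leq$, I would invoke local universality: every separable $\mathcal{N}$ embeds into some ultrapower $(\mathcal{S}')^{\mathcal{U}}$; trace-preserving $^*$-embeddings never increase the value of a universal sentence, and ultrapowers preserve sentence values, giving $\sigma^{\mathcal{N}} \leq \sigma^{(\mathcal{S}')^{\mathcal{U}}} = \sigma^{\mathcal{S}'}$.

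With this identity in hand, the contradiction follows directly. Assuming the universal theory of $\mathcal{S}'$ is computable, for each Turing machine $M$ I would apply Theorem~\ref{MIPcoEncoding} to effectively produce the restricted universal sentence $\sigma_M$, then use the assumed algorithm to compute rationals $a < b$ with $b - a < 1/8$ and $\sigma_M^{\mathcal{S}'} \in (a,b)$. Combining the displayed identity with the dichotomy of Theorem~\ref{MIPcoEncoding}, one has $\sigma_M^{\mathcal{S}'} = 1$ if $M$ does not halt and $\sigma_M^{\mathcal{S}'} \leq \tfrac12$ if $M$ halts. Thresholding at, say, $3/4$ decides halting, contradicting undecidability.

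The main (and really only) step requiring care is the identity in the first paragraph, specifically the $\geq$ direction and the verification that quantifier-free values are preserved under passage to the subalgebra generated by a tuple. This is comfortable precisely because quantifier-free formulas involve only $^*$-polynomials evaluated against the $2$-norm, which $\mathcal{N}_{\bar a}$ inherits from $\mathcal{S}'$. Notably, the subtler continuous-model-theoretic issues — definability of the PVM set, uniform almost-near estimates — do not reappear here, having already been absorbed into the statement of Theorem~\ref{MIPcoEncoding}.
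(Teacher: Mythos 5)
Your proof is correct and follows essentially the same route as the paper: use local universality to identify $\sigma^{\mathcal{S}'}$ with $\sup_{\mathcal{N}}\sigma^{\mathcal{N}}$, then feed the $\sigma_M$ from Theorem~\ref{MIPcoEncoding} into the hypothetical algorithm and threshold to decide halting. One small slip worth fixing: in your justification of the identity $\sigma^{\mathcal{S}'} = \sup_{\mathcal{N}}\sigma^{\mathcal{N}}$ the labels are swapped — the subalgebra argument in fact shows $\sigma^{\mathcal{S}'} \leq \sup_{\mathcal{N}}\sigma^{\mathcal{N}}$, and the ultrapower-embedding argument shows $\sup_{\mathcal{N}}\sigma^{\mathcal{N}} \leq \sigma^{\mathcal{S}'}$ — but the two arguments themselves are sound, so the identity still holds.
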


\begin{proof}
    By local universality of $\cal S$, we have
    \[
    \sup_{\cal N} \sigma^{\cal N} = \sigma^{\cal S}
    \]
    for every universal sentence $\sigma$ in the language of tracial von Neumann algebras.  Assume, by way of contradiction, that the universal theory of $\cal S$ is computable.  Then for any Turing machine $M$, we can computably find an interval $(a,b)$, with $a,b \in \bbQ$, $b-a < \frac{1}{8}$ such that $\sigma_{M}^{\cal S} \in (a,b)$, where $\sigma_{M}$ is the restricted universal sentence given by Theorem \ref{MIPcoEncoding}.  Since $\sigma_{M}^{\cal S}$ is precisely $\sup_{\cal N} \sigma_{M}^{\cal N}$, by Theorem \ref{MIPcoEncoding}, then we know $M$ halts if $1 \not\in (a,b)$ and $M$ does not halt otherwise.  This contradicts the undecidability of the Halting Problem.
\end{proof}
Note that by the Pavelka-style completeness theorem for continuous logic \cite{ben_yaacov_proof_2010} we have, for every sentence $\sigma$,
\[
\sup_{\mathcal{N}} \sigma^\mathcal{N} \;=\; \inf\{r \in \mathbb{Q} : T_{II_1} \vdash r \dminus \sigma\}.
\]
In words, the maximum value attained by $\sigma$ across all tracial von Neumann algebras coincides with the smallest rational $r$ such that the theory of tracial von Neumann algebras proves the inequality $r \geq \sigma$.  The left-hand side is precisely $\sigma^{\mathcal{S}}$ by local universality.  Since we have shown that computing this quantity is coRE-complete, it follows that there is no effective approximation scheme for such suprema.  In particular, neither side of the above identity can be algorithmically approximated from below for universal sentences. This should be interpreted as the class of tracial von Neumann algebras not having any nice approximation property encompass everything.

For instance, if Connes’ Embedding Problem had held, then the left-hand side could be approximated by optimizing over finite-dimensional matrix algebras, contradicting our undecidability result.  This non-approximability phenomenon should be compared with \cite[Proposition~3.6]{manzoor_invariant_2025}, where the second author established a related obstruction for the class of tracial von Neumann algebras.

Often, uncomputable universal theory results like the theorem above come with ultraproduct nonembedding results as in \cite{goldbring_universal_2024} and \cite{arulseelan_undecidability_2024}. However, by definition of $\cal S$, all separable factors embed in its ultrapower, so we do not have such results. But, in exchange, we get an arguably more interesting result:

\begin{thm}\label{SNoCompPres}
    No locally universal tracial von Neumann algebra admits a computable presentation.
\end{thm}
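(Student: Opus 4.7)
The plan is to derive this from Theorem~\ref{SUndecidable} by showing that a computable presentation of a locally universal algebra would force its universal theory to be computable, contradicting that theorem. The argument combines two one-sided enumerability results into full computability.

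Suppose for contradiction that a locally universal $\cal N$ admits a computable presentation $\cal N^\# = (\cal N, (a_n)_n)$. The first step is to observe that such a presentation already yields computable enumeration of \emph{lower} approximations to $\sigma^{\cal N}$ for every restricted universal sentence $\sigma = \sup_{\bar x}\psi(\bar x)$. The computable presentation lets us evaluate the quantifier-free, $1$-Lipschitz formula $\psi$ to arbitrary precision on tuples $\bar p$ of generated points of $\cal N^\#$. Since generated points are $\|\cdot\|_2$-dense in $\cal N$ and $\psi$ is uniformly continuous, $\sigma^{\cal N}$ equals the supremum of $\psi(\bar p)$ over generated tuples lying in the operator-norm unit ball, so enumerating such evaluations yields a computable stream of rational lower bounds converging to $\sigma^{\cal N}$.

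The second step is to recall (as noted at the opening of this subsection) that the universal theory of any locally universal tracial von Neumann algebra is already weakly effectively enumerable, i.e.\ one can computably enumerate the pairs $(\sigma,r)$ for which $\sigma^{\cal N}\le r$. Combining this upper-bound enumerability with the lower-bound enumerability from the previous step, one obtains a full computable approximation scheme: given a restricted universal sentence $\sigma$ and rational $\delta>0$, interleave the two enumerations until rationals $r_1<r_2$ with $r_2-r_1<\delta$ and $r_1\le \sigma^{\cal N}\le r_2$ are produced. This directly contradicts Theorem~\ref{SUndecidable}.

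The main obstacle is the first step: restricted universal sentences formally quantify over the operator-norm unit ball, whereas a computable presentation only controls $\|\cdot\|_2$ of generated points. I plan to handle this by the standard techniques of \cite{goldbring_operator_2021} --- replacing generated points by contractive approximations via rescaling or polynomial functional calculus, and exploiting Lipschitz continuity of restricted formulas --- so that the approximating tuples can be forced into the quantifier domain without losing effectivity of the evaluation of $\psi$.
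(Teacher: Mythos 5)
Your proposal is correct and follows essentially the same approach as the paper: combine the already-known weak effective enumerability of the universal theory of a locally universal algebra (which gives effective upper bounds) with the lower-bound enumerability that a computable presentation provides, conclude the universal theory would be computable, and contradict Theorem~\ref{SUndecidable}. The only difference is that the paper achieves this by directly citing \cite[Proposition~2.7]{goldbring_universal_2024}, which is precisely the statement ``computable presentation $+$ weakly effectively enumerable universal theory $\Rightarrow$ computable universal theory,'' whereas you sketch a re-derivation of that proposition (your ``$1$-Lipschitz'' claim for restricted formulas is slightly too strong --- restricted formulas have computable moduli of uniform continuity rather than being $1$-Lipschitz --- and the operator-norm unit ball versus $\|\cdot\|_2$-density issue you flag is exactly what that cited proposition handles, so your plan to defer to \cite{goldbring_operator_2021} is the right move).
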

\begin{proof}
    By preceding discussions, $\cal S$ has a weakly effective enumerable universal theory. Now suppose it had a computable presentation, then by \cite[Proposition 2.7]{goldbring_universal_2024} we have that $\cal S$ has a computable universal theory. This contradicts Theorem \ref{SUndecidable}.
\end{proof}

\begin{remark}
    The above theorem provides, to our knowledge, the first  examples of separable $\mathrm{II}_1$ factors (or even tracial von Neumann algebras) with \textbf{no} computable presentations. It is easily seen via cardinality considerations that the vast majority of separable $\mathrm{II}_1$ factors do not admit computable presentations.  More precisely, since there are only countably many computable presentations and far more than countably many separable $\mathrm{II}_1$ factors, most separable $\mathrm{II}_1$ factors will not admit computable presentations.  There are known examples of $\mathrm{II}_1$ factors whose so-called ``standard presentations'' are not computable, but this does not rule out the possibility that they admit some other presentation which is computable.
\end{remark}

Recall from \cite{farah_model_2014} that given any locally universal factor $\cal S$, both $\cal S \otimes \R$ and $\cal S * \R$ are also locally universal.  The former is McDuff and the latter has neither property $\Gamma$ nor property (T). We can also embed $\cal S$ into a property (T) factor by \cite{chifan_embedding_2023}. Thus we have also demonstrated the first  examples of McDuff $\mathrm{II}_1$ factors, $\mathrm{II}_1$ factors with neither property $\Gamma$ nor property~(T), and $\mathrm{II}_1$ factors with property (T) that do not admit computable presentation.


Our results may explain in retrospect why the poor-man's solution to the Connes Embedding Problem in \cite{farah_model_2014-1} has yet to yield direct operator-algebraic constructions of non-Connes-embeddable factors.  Our results say that the examples constructed via compactness cannot easily be used in computations.  We remark on the other hand that there are many factors that are conjecturally not Connes-embeddable but nevertheless have computable presentations.  For example, the group von Neumann algebra of the Higman group is expected to not be Connes-embeddable (cf. \cite{Thom2012MetricApproxHigman}) but admits a computable presentation by the decidability of the word problem for the Higman group \cite{diekert_laun_ushakov_higman_inP_2012} and \cite{goldbring_operator_2021}.  Even in this case, our results rule out a potential proof strategy for showing it is not Connes-embeddable: the group von Neumann algebra of the Higman group cannot be locally universal.

We now consider the semifinite von Neumann algebras using the framework of \cite{arulseelan_model_2025}.  

\begin{prop}
    If $\cal S$ is locally universal for tracial von Neumann algebras, then $\cal S \otimes B(\cal H)$ is locally universal for semifinite von Neumann algebras.
\end{prop}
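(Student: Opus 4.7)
The plan is to reduce local universality in the semifinite setting to the already-established local universality of $\cal S$ in the tracial setting, by exploiting Morita-type stabilization of semifinite von Neumann algebras by $B(\cal H)$. Let $M$ be a separable semifinite von Neumann algebra with faithful normal semifinite trace $\tau_M$, and decompose $M = M_f \oplus M_\infty$ along the central projection cutting off the finite part, so that $M_f$ is a (finite) tracial algebra and $M_\infty$ is properly infinite and semifinite.

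For the properly infinite part, proper infiniteness together with semifiniteness and separability yields a finite projection $p \in M_\infty$ of central support $1_{M_\infty}$, and the standard stabilization isomorphism $M_\infty \cong pM_\infty p \otimes B(\cal H)$ follows by choosing a countable family of partial isometries tiling $1_{M_\infty}$ by copies of $p$. Since $pM_\infty p$ is a separable tracial von Neumann algebra, local universality of $\cal S$ provides a trace-preserving embedding $pM_\infty p \hookrightarrow \cal S^{\mathcal U}$; tensoring with $B(\cal H)$ then gives an embedding $M_\infty \hookrightarrow \cal S^{\mathcal U} \otimes B(\cal H)$, which sits inside $(\cal S \otimes B(\cal H))^{\mathcal U}$ via the canonical trace-preserving map. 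The finite part $M_f$ is itself tracial, so by local universality of $\cal S$ it embeds trace-preservingly into the compression $q(\cal S \otimes B(\cal H))^{\mathcal U} q$, where $q \in \cal S \otimes B(\cal H)$ is chosen as a projection of trace $\tau_M(1_{M_f})$.

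To combine the two pieces into a single embedding $M \hookrightarrow (\cal S \otimes B(\cal H))^{\mathcal U}$, I would use that $1 - q$ is Murray--von Neumann equivalent to $1$ in the $\mathrm{II}_\infty$ factor $\cal S \otimes B(\cal H)$, so that $(1-q)(\cal S \otimes B(\cal H))^{\mathcal U}(1-q) \cong (\cal S \otimes B(\cal H))^{\mathcal U}$ and can accommodate the embedded $M_\infty$, while $q(\cal S \otimes B(\cal H))^{\mathcal U} q$ accommodates the embedded $M_f$. The direct sum $M_f \oplus M_\infty$ then maps into the orthogonal sum $q(\cal S \otimes B(\cal H))^{\mathcal U} q + (1-q)(\cal S \otimes B(\cal H))^{\mathcal U}(1-q) \subseteq (\cal S \otimes B(\cal H))^{\mathcal U}$, giving the desired trace-preserving embedding.

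The main technical obstacle, I expect, is making the ultrapower-tensor interplay precise in Arulseelan's semifinite framework: verifying that the canonical map $\cal S^{\mathcal U} \otimes B(\cal H) \hookrightarrow (\cal S \otimes B(\cal H))^{\mathcal U}$ is a trace-preserving $*$-embedding of semifinite von Neumann algebras (with the correct normalizations of semifinite traces), and that the Murray--von Neumann comparison needed for the assembly step transfers correctly to the ultrapower, so that the combined embedding is carried out inside one ultrapower rather than forcing an iteration.
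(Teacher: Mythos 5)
Your approach is correct in spirit but takes a genuinely different route from the paper's. The paper's proof asserts directly that every separable semifinite $\mathcal N$ is isomorphic to $\mathcal N_0 \,\bar\otimes\, B(\mathcal H)$ for a $\mathrm{II}_1$ factor $\mathcal N_0$, and then feeds this into the chain $\mathcal N_0 \hookrightarrow \mathcal S^{\mathcal U}$, $\mathcal S^{\mathcal U}\otimes B(\mathcal H)\hookrightarrow (\mathcal S\otimes B(\mathcal H))^{\mathcal U}$ (using the diagonal embedding of $B(\mathcal H)$ into its Ocneanu ultrapower). Strictly speaking that factorization of $\mathcal N$ holds only for $\mathrm{II}_\infty$ factors; the paper is implicitly relying on the companion reduction (parallel to the "every tracial algebra embeds in a $\mathrm{II}_1$ factor" step used in constructing $\mathcal S$) that every separable semifinite von Neumann algebra embeds trace-preservingly into a $\mathrm{II}_\infty$ factor, after which the factorization is valid. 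You sidestep that reduction by decomposing $M = M_f \oplus M_\infty$ along the finite/properly-infinite central projection, stabilizing $M_\infty$ by a finite projection of full central support, embedding both pieces via local universality of $\mathcal S$, and gluing with Murray--von Neumann equivalence in the $\mathrm{II}_\infty$ ultrapower. Your version is more explicit about the general (non-factor) case, at the cost of a gluing step that requires care with trace normalizations: if $\tau_M(1_{M_f})=\infty$ you cannot take $q$ of finite trace, so you must either choose $q$ with both $\tau(q)$ and $\tau(1-q)$ infinite, or pre-decompose $M_f$ into finite-trace corners; as written, the phrase "a projection $q$ of trace $\tau_M(1_{M_f})$" glosses over this. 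The paper's route, once the implicit embedding into a $\mathrm{II}_\infty$ factor is supplied, is tighter since everything collapses to a single tensor factorization and avoids the Murray--von Neumann assembly. Both routes ultimately hinge on the same two facts -- local universality of $\mathcal S$ in the tracial setting, and stabilization by $B(\mathcal H)$ -- they just deploy the decomposition in a different order.
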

\begin{proof}
    Let $\cal N$ be a semifinite von Neumann algebra.  Then $\cal N \cong \cal N_0 \otimes B(\cal H)$ for some $\mathrm{II}_1$ factor $\cal N_0$.  Then, by assumption, there is an embedding $\cal N_0 \hookrightarrow \cal S^{\cal U}$.  We also have $S^{\cal U} \hookrightarrow S^{\cal U} \otimes B(\cal H)$ via the map $(s_i)^\bullet \mapsto (s_i)^\bullet \otimes 1$.  We also have the diagonal embedding $B(\cal H) \hookrightarrow B(\cal H)^{\cal U}$ of $B(\cal H)$ into its generalized Ocneanu ultrapower.  Putting these together yields an embedding of $\cal N_0 \otimes B(\cal H) \cong \cal N$ into $(\cal S \otimes B(\cal H))^{\cal U}$, proving the claim.
\end{proof}

Thus, by \cite[Theorem 8.2]{arulseelan_model_2025} and \cite[Theorem 8.5]{arulseelan_model_2025} combined with Theorem \ref{SUndecidable} we have the following consequence.

\begin{cor}\label{IIinftyNotComputable}
    The universal theory of $\cal S \otimes B(\cal H)$ is not computable.
\end{cor}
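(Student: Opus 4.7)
The plan is to mirror the proof of Theorem~\ref{SUndecidable} in the semifinite setting, using the preceding proposition together with the computable continuous model theory of semifinite von Neumann algebras developed in \cite{arulseelan_model_2025}. Roughly, local universality of $\mathcal{S}\otimes B(\mathcal{H})$ gives that its universal theory maximizes each universal sentence across the whole semifinite class, and the semifinite-to-tracial correspondence of \cite{arulseelan_model_2025} lets us pull back Theorem~\ref{MIPcoEncoding} into the semifinite language.

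First I would use the proposition above to assert that $\mathcal{S}\otimes B(\mathcal{H})$ is locally universal among separable semifinite von Neumann algebras. As in the $\mathrm{II}_1$ case, this implies that for every universal sentence $\sigma$ in the semifinite language one has $\sigma^{\mathcal{S}\otimes B(\mathcal{H})}=\sup_{\mathcal{M}}\sigma^{\mathcal{M}}$, with the supremum taken over separable semifinite $\mathcal{M}$. In particular, computing the universal theory of $\mathcal{S}\otimes B(\mathcal{H})$ is the same as computing the universal theory of an arbitrary locally universal semifinite algebra.

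Second, I would invoke \cite[Theorem~8.2]{arulseelan_model_2025} and \cite[Theorem~8.5]{arulseelan_model_2025} to obtain a computable translation from restricted universal sentences in the tracial language into restricted universal sentences in the semifinite language, compatible with the finite-trace corner of a semifinite factor. Applied to the sentences $\sigma_M$ provided by Theorem~\ref{MIPcoEncoding}, this yields, for each Turing machine $M$, a restricted universal semifinite sentence $\widetilde{\sigma}_M$ such that $\widetilde{\sigma}_M^{\mathcal{S}\otimes B(\mathcal{H})}=1$ if $M$ does not halt and $\widetilde{\sigma}_M^{\mathcal{S}\otimes B(\mathcal{H})}\le \tfrac12$ if $M$ halts. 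A hypothetical algorithm computing the universal theory of $\mathcal{S}\otimes B(\mathcal{H})$ to accuracy $1/8$ would therefore decide the halting problem, contradicting Turing's theorem exactly as in the proof of Theorem~\ref{SUndecidable}.

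The main obstacle I anticipate is purely a bookkeeping one: verifying that the correspondence supplied by \cite[Theorems~8.2 and 8.5]{arulseelan_model_2025} is genuinely effective at the level of restricted universal formulas (preserving the computable dense set of connectives and respecting truncated subtraction), rather than merely a semantic equivalence. Once that effectiveness is extracted from the cited framework, the rest of the argument is formal and the corollary drops out as the direct semifinite analogue of Theorem~\ref{SUndecidable}.
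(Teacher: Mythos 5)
Your proposal is correct and follows essentially the same route as the paper, which derives the corollary by combining the preceding local-universality proposition with Theorems~8.2 and~8.5 of \cite{arulseelan_model_2025} and Theorem~\ref{SUndecidable}. The only cosmetic difference is that you inline the halting reduction (going back through Theorem~\ref{MIPcoEncoding} and constructing semifinite sentences $\widetilde{\sigma}_M$) rather than citing Theorem~\ref{SUndecidable} as a black box; the paper simply invokes Theorem~\ref{SUndecidable} directly, which is Theorem~\ref{MIPcoEncoding} already packaged as an uncomputability statement.
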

In particular, by the same argument as in the tracial von Neumann algebras case, we get that the class of semifinite von Neumann algebras cannot have nice approximation properties encompass the entire class. We also have:

\begin{cor}\label{IIinftyNoCompPres}
    Locally universal semifinite von Neumann algebras do not admit computable presentations.
\end{cor}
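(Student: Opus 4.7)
The plan is to mirror the structure of the proof of Theorem \ref{SNoCompPres} in the semifinite setting, using Corollary \ref{IIinftyNotComputable} as the analogue of Theorem \ref{SUndecidable} and appealing to the semifinite continuous model theory framework of \cite{arulseelan_model_2025} as a replacement for the tracial tools of Goldbring--Hart. First, I would observe that any locally universal semifinite von Neumann algebra has the same universal theory as $\cal S \otimes B(\cal H)$: indeed, local universality means every separable semifinite von Neumann algebra embeds into an ultrapower of both, and for universal sentences this forces the two suprema to agree. Consequently, it suffices to show that $\cal S \otimes B(\cal H)$ (and more generally any locally universal semifinite algebra) has no computable presentation.

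Next, I would argue that the universal theory of $\cal S \otimes B(\cal H)$ is weakly effectively enumerable. This should follow by exactly the same mechanism as in the tracial case: the universal sentences that witness bounds on $\sigma^{\cal S \otimes B(\cal H)}$ can be enumerated using Pavelka-style completeness for the continuous theory of semifinite von Neumann algebras developed in \cite{arulseelan_model_2025}, together with the fact that this theory is recursively axiomatizable in that framework. The relevant analogues of \cite[Theorem~5.2 and Theorem~5.6]{goldbring_universal_2024} used to produce a weakly effectively enumerable universal theory in the tracial case are accounted for by \cite[Theorem~8.2 and Theorem~8.5]{arulseelan_model_2025}, which were already invoked in the derivation of Corollary \ref{IIinftyNotComputable}.

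Finally, I would invoke the semifinite analogue of \cite[Proposition~2.7]{goldbring_universal_2024}: if a semifinite von Neumann algebra admits a computable presentation and has a weakly effectively enumerable universal theory, then its universal theory is in fact computable. Assuming towards a contradiction that a locally universal semifinite algebra $\cal M$ has a computable presentation, this would render its universal theory computable, which by the first step would make the universal theory of $\cal S \otimes B(\cal H)$ computable, contradicting Corollary \ref{IIinftyNotComputable}.

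The main obstacle I anticipate is not a conceptual one but a bookkeeping one: verifying that the ingredients of \cite[Proposition~2.7]{goldbring_universal_2024}, namely the interaction between computable presentations and weak effective enumerability of the universal theory, transfer cleanly to the semifinite framework of \cite{arulseelan_model_2025}, where the metric is no longer given by a finite trace and one must work with the localized seminorms coming from projections of finite trace. Once this transfer is in place, the argument is a direct analogue of the proof of Theorem \ref{SNoCompPres}.
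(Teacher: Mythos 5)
Your proposal is correct and matches the argument the paper intends: the corollary is stated without an explicit proof, but the context (the reference to [Theorems 8.2 and 8.5]{arulseelan\_model\_2025} and the remark that one argues "as in the tracial von Neumann algebras case") makes clear that the intended proof is precisely your mirroring of Theorem~\ref{SNoCompPres}, replacing Theorem~\ref{SUndecidable} by Corollary~\ref{IIinftyNotComputable} and the Goldbring--Hart tracial machinery by the semifinite framework of \cite{arulseelan_model_2025}. Your flagged concern about transferring \cite[Proposition~2.7]{goldbring_universal_2024} to the semifinite metric is the right thing to check, and the paper silently relies on exactly this transfer.
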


It would be interesting to connect the model theory of type III von Neumann algebras (see \cite{arulseelan_totally_2025} and \cite{arulseelan_model_2025}) using the results of \cite{marrakchi_almost_2023} and \cite{lin_tracial_2023}, which both heavily use the Tomita-Takesaki theory.  Since the aforementioned model theories are built to accommodate the Tomita-Takesaki theory and the latter two papers are seen in this paper to be closely linked to the model theory of tracial von Neumann algebras, we suspect there to be rich connections here.  We leave this to future work.
\subsection{Consequences for \texorpdfstring{\cstar}{C*}-algebras}\label{sec:4.3}

A natural question is to what extent our methods extend to the setting of \cstar-algebras.  Since our proofs rely on traces, we work in the language of \textbf{tracial \cstar-algebras}.  We need a \cstar-algebra analogue of Lemma \ref{AlmostNear} which, in turn, depends on the results of either \cite{kim_synchronous_2018} or \cite{de_la_salle_orthogonalization_2022} who each use spectral projections to establish 2-norm stability of POVMs.

Fortunately, this problem essentially reduces to a simple unitary stability result for \cstar-algebras. Here, the norm control of polar decompositions is the correct \cstar-analog of spectral projection arguments in 2-norm.


\begin{lem}\label{U-Stab}
Let $\mathcal A$ be a \cstar-algebra and $m \in \mathbb{N}$ with $m \geq 1$ be given and assume $v \in \mathcal A$ satisfies
\[
\max\bigl(\,\|v^*v-1\|,\ \|vv^*-1\|,\ \|v^m-1\|\,\bigr)\ \le\ \epsilon,
\]
where $\epsilon<2^{-m}$.
Then there exists a unitary $u \in \mathcal A$ such that $u^n=1$ and
\[
\|u-v\|\ \le\ 2^{\,m+2}\,\epsilon.
\]
\end{lem}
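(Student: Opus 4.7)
The plan is to prove the lemma in two stages: first, replace $v$ by a nearby unitary $w$ via polar decomposition; second, snap $w$ to an order-$m$ unitary by continuous functional calculus.

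For the first stage, the hypothesis $\|v^*v-1\|\le\epsilon$ implies $v^*v$ has spectrum in $[1-\epsilon,1+\epsilon]$ and is therefore invertible; together with invertibility of $vv^*$ (from the second hypothesis), this forces $v$ itself to be invertible, and $w:=v(v^*v)^{-1/2}$ is then unitary (both $w^*w=1$ and $ww^*=v(v^*v)^{-1}v^*=vv^{-1}(v^*)^{-1}v^*=1$ hold, using that $(v^*v)^{-1}=v^{-1}(v^*)^{-1}$ when $v$ is invertible). A routine continuous-functional-calculus estimate for the scalar function $t\mapsto t^{-1/2}$ on $[1-\epsilon,1+\epsilon]$ gives $\|(v^*v)^{-1/2}-1\|\le c\epsilon$ for a universal constant $c=O(1)$, and hence $\|w-v\|\le\|v\|\cdot c\epsilon=O(\epsilon)$.

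For the second stage, I would bound $\|w^m-1\|$ via the telescoping identity $w^m-v^m=\sum_{k=0}^{m-1}w^{m-1-k}(w-v)v^k$ together with $\|w\|=1$ and $\|v\|\le\sqrt{1+\epsilon}$, yielding $\|w^m-1\|\le C'm\epsilon$ for an explicit constant $C'$. The hypothesis $\epsilon<2^{-m}$ then ensures $\|w^m-1\|<2$; equivalently, $\sigma(w^m)$ avoids $-1$, so $\sigma(w)\subset S^1$ lies in $m$ disjoint arcs, one surrounding each $m$-th root of unity $\zeta_k=e^{2\pi ik/m}$. The locally-constant nearest-root-of-unity map $f$, defined on the union of these arcs by sending $z$ to the nearest $\zeta_k$, is continuous on its domain and in particular on $\sigma(w)$, so $u:=f(w)$ is a well-defined normal element by continuous functional calculus with spectrum in $\mu_m:=\{\zeta_0,\ldots,\zeta_{m-1}\}$; hence $u$ is unitary with $u^m=1$. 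The elementary trigonometric inequality $|\sin(\alpha/2)|\le(\pi/(2m))|\sin(m\alpha/2)|$ for $|\alpha|\le\pi/m$ translates into $|z-f(z)|\le(\pi/(2m))|z^m-1|$ on each arc, hence $\|u-w\|\le(\pi/(2m))\|w^m-1\|=O(\epsilon)$ uniformly in $m$. Combining with $\|w-v\|=O(\epsilon)$ gives $\|u-v\|=O(\epsilon)$, which is smaller than the stated bound $2^{m+2}\epsilon$ with much room to spare.

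The main technical hurdle is the spectral-gap inequality $\|w^m-1\|<2$ needed to make the nearest-root-of-unity map continuous on $\sigma(w)$; this is exactly the role of the hypothesis $\epsilon<2^{-m}$, which absorbs the factor of roughly $m\cdot\|v\|^{m-1}$ coming from the telescoping in $w^m-v^m$. Careful bookkeeping of constants is required, especially for small $m$ where the constraint is tightest, but the stated bound $2^{m+2}\epsilon$ is loose enough that it survives easily. The edge case $m=1$ is a transparent aside: take $u=1$ and observe $\|u-v\|=\|v-1\|=\|v^m-1\|\le\epsilon\le 2^{m+2}\epsilon$.
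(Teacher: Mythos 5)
Your argument is correct and matches the paper's proof essentially step for step: pass to the polar-decomposition unitary, telescope to bound $\|w^m-1\|$, use $\epsilon<2^{-m}$ to force a spectral gap, and apply the nearest-root-of-unity map via continuous functional calculus. Incidentally, your choice $w := v(v^*v)^{-1/2}$ is the correct unitary part; the paper writes $u_0 := v(vv^*)^{-1/2}$, which places the square root on the wrong side and is not unitary in general, so your version quietly fixes a typo.
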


\begin{proof}
Define $u_0 := v(vv^*)^{-1/2}$. Then $u_0$ is a unitary satisfying $\norm{v-u_0}\leq \norm{(vv^*)^{1/2}-1}$.  Note by continuous functional calculus that $\norm{vv^*-1}\leq \epsilon$ implies $\spec(vv^*)\subset [1-\epsilon,1+\epsilon]$.  Again by functional calculus, we see that $\norm{(vv^*)^{1/2}-1}$ is bounded above by $\sup_{x\in [1-\epsilon,1+\epsilon]} |\sqrt{x}-1|$, which is, in turn, bounded above by $\epsilon$. Hence $\norm{v-u_0}\leq \epsilon$.

Next note that
\[\begin{aligned}
    \norm{u_0^m -1}&\leq \norm{u_0^m-v^m}+\norm{v^m-1}\\
                &\leq \Bigl(\sum_k\norm{u_0}^k \norm{v}^{m-k}\Bigr) \norm{u_0-v}+\epsilon\\
                &\leq 2^{m+1}\epsilon.
\end{aligned}\]
In the last line we used that $\norm{u}\leq 2$ and $\norm{v}\leq 1$.

Since $\norm{u_0^m-1}<2$, the spectrum of $u_0^m$ is contained in a union of disjoint arcs, each containing exactly one $m$th root of unity. Letting $f$ be the continuous map that collapses each of these arcs to the corresponding root of unity, we have $f(u_0)^m=1$.  A trigonometric argument shows that $\norm{u_0-f(u_0)}\leq 2^{m+2}\epsilon/m$. Now $u := f(u_0)$ is a unitary such that $\norm{u-v}\leq 2^{m+2}\epsilon$, as desired.
\end{proof}

Let $U_{n,m}^{\mathcal A}$ denote the set of $n$-tuples of unitaries of order $m$ in $\mathcal A$. The preceding lemma implies the mapping $\mathcal A \mapsto U_{n,m}^{\mathcal A}$ forms a definable set relative to the theory of \cstar-algebras. Write $X_{n,m}^{\mathcal A}$ for the set of $n$-tuples of PVMs with $m$ outcomes in $\mathcal A$. Fixing a primitive $m$th root of unity $\omega$, there is a bijection
\[
X_{n,m}^{\mathcal A}\ \longleftrightarrow\ U_{n,m}^{\mathcal A},\qquad
\text{given by }\
u_x=\sum_{a=0}^{n-1}\omega^{a}e_x^{a},\quad
e_x^{a}=\frac{1}{n}\sum_{c=0}^{n-1}\omega^{-ac}(u_x)^{c}.
\]
Since the bijection is given by the image of a term in the language of \cstar-algebras, it follows that the mapping $\mathcal A \mapsto X_{n,m}^{\mathcal{A}}$ forms a definable set relative to the theory of \cstar-algebras.  This definability of PVMs relative to the theory of \cstar-algebras may be of independent interest in the model theory of \cstar-algebras.

By the same argument as the tracial von Neumann algebra case, we have that
\[
\omega^s_{\mathrm{co}}(\mathfrak G)\ =\ \sup_{(\mathcal A,\tau)}\ \sup_{\overline{e}\in X_{n,m}^{\mathcal A}}\ \psi_{\mathfrak G}\bigl(\overline{e}\bigr).
\]
where the outer supremum ranges over tracial \cstar-algebras. Using $\mathsf{MIP}^\mathsf{co}=\mathsf{coRE}$ and arguing exactly as in the tracial von Neumann algebra case, we obtain:

\begin{thm}
The universal theory of locally universal tracial \cstar-algebras is not computable.
\end{thm}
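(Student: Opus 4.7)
The plan is to mirror exactly the proof of Theorem~\ref{SUndecidable}, using the tracial \cstar-algebra analogue of Theorem~\ref{MIPcoEncoding} established in the discussion immediately preceding this theorem. By a locally universal tracial \cstar-algebra I mean a separable tracial \cstar-algebra $\mathcal B$ into whose ultrapower every separable tracial \cstar-algebra embeds.

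First I would fix a locally universal tracial \cstar-algebra $\mathcal B$ and observe that, for every universal sentence $\sigma$ in the language of tracial \cstar-algebras,
\[
  \sigma^{\mathcal B}\;=\;\sup_{\mathcal A}\sigma^{\mathcal A},
\]
where the supremum ranges over all separable tracial \cstar-algebras. One inequality is trivial, and the other is the standard transfer fact of continuous logic: if $\mathcal A\hookrightarrow\mathcal B^{\mathcal U}$, then $\sigma^{\mathcal A}\le \sigma^{\mathcal B^{\mathcal U}}=\sigma^{\mathcal B}$, since universal sentences weakly decrease under substructures and are invariant under ultrapowers. This is the precise analogue of the corresponding property of $\mathcal S$ used in Theorem~\ref{SUndecidable}.

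Next I would argue by contradiction. Suppose the universal theory of $\mathcal B$ were computable. For an arbitrary Turing machine $M$, invoke the \cstar-version of Theorem~\ref{MIPcoEncoding} to obtain a restricted universal sentence $\sigma_M$, computable from $M$, such that $\sup_{\mathcal A}\sigma_M^{\mathcal A}=1$ if $M$ does not halt and $\le \tfrac12$ if $M$ halts. By the display above, $\sigma_M^{\mathcal B}$ equals this supremum; by the computability assumption, one can then effectively localize it to a rational interval of length less than $\tfrac18$ and decide halting according to whether $1$ belongs to this interval. This contradicts undecidability of the halting problem.

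The main substantive step — the \cstar-algebra analogue of Theorem~\ref{MIPcoEncoding} — has already been carried out in the preceding discussion, using the identification of PVMs with tuples of unitaries of order $m$ together with the computable almost-near modulus of Lemma~\ref{U-Stab}. The remaining ingredients (existence of a separable locally universal tracial \cstar-algebra via the downward L\"owenheim--Skolem construction applied to a large tracial \cstar-algebra containing a copy of every separable one, and the fact that all locally universal tracial \cstar-algebras share the same universal theory) are routine transcriptions of the tracial von Neumann argument, so the only genuine obstacle — producing the encoding $\sigma_M$ in the tracial \cstar\ setting — was cleared before the theorem statement.
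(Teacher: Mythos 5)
Your proposal is correct and follows the same route as the paper: the paper explicitly says to argue ``exactly as in the tracial von Neumann algebra case'' after establishing the \cstar-analogue of the game-value encoding via Lemma~\ref{U-Stab} and the unitary/PVM bijection. Your reconstruction of the reduction from the halting problem, using local universality to replace the supremum over all tracial \cstar-algebras by evaluation in a single locally universal one, matches the intended argument.
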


\begin{thm}
Locally universal tracial \cstar-algebras do not admit computable presentations.
\end{thm}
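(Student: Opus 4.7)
The plan is to mirror the proof of Theorem \ref{SNoCompPres} in the tracial \cstar-algebra setting. Two ingredients are needed: first, that any locally universal tracial \cstar-algebra $\mathcal{A}$ has a weakly effectively enumerable universal theory; and second, an analogue of \cite[Proposition 2.7]{goldbring_universal_2024} for tracial \cstar-algebras, stating that if $\mathcal{A}$ admits both a computable presentation and a weakly effectively enumerable universal theory, then its universal theory is in fact computable. Granting these, the proof is immediate: assuming toward contradiction that some locally universal tracial \cstar-algebra $\mathcal{A}$ has a computable presentation, the two ingredients force the universal theory of $\mathcal{A}$ to be computable, contradicting the preceding theorem.

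For the first ingredient, I would run the same enumeration-of-proofs argument as in \cite[Theorems 5.2 and 5.6]{goldbring_universal_2024}. By Pavelka-style completeness for continuous logic, whenever $\sigma^\mathcal{A}\le r$ for a locally universal $\mathcal{A}$ (which by local universality coincides with $\sup_{\mathcal{B}}\sigma^{\mathcal{B}}\le r$ over all tracial \cstar-algebras $\mathcal{B}$), the formal inequality $r\dminus\sigma$ is provable in the theory of tracial \cstar-algebras, and one can effectively enumerate all such proofs. This uses only that the theory of tracial \cstar-algebras is effectively axiomatizable.

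For the second ingredient, the proof of \cite[Proposition 2.7]{goldbring_universal_2024} should adapt essentially verbatim: a computable presentation of $\mathcal{A}$ lets one approximate any restricted universal sentence $\sup_{\bar x}\psi(\bar x)$ from below by evaluating $\psi$ on finite tuples of generated points (with an effective handle on the modulus of uniform continuity of $\psi$), while the weak effective enumerability supplies matching upper bounds of the form $\sigma\dminus r$. Intersecting these two enumerations to within any desired rational tolerance produces the computable universal theory.

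The main obstacle will be confirming that both ingredients go through in the tracial \cstar-algebra language. Both depend on general features of continuous logic (Pavelka completeness and evaluation of quantifier-free formulas on a dense set of generated points) which are insensitive to whether the ambient metric is the $2$-norm or the operator norm; nevertheless one must check that the chosen definition of a computable presentation of a tracial \cstar-algebra makes both the \cstar-norm and the trace $\tau$ simultaneously computable on generated points, so that arbitrary restricted quantifier-free formulas can be effectively evaluated. Once this is verified, the implication computable presentation plus weak effective enumerability implies computable universal theory transfers directly from the tracial von Neumann algebra setting.
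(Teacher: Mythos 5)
Your proof mirrors the paper's approach exactly: the paper derives this theorem by ``arguing exactly as in the tracial von Neumann algebra case,'' i.e., combining weak effective enumerability of the universal theory (from effective axiomatizability and Pavelka-style completeness for tracial \cstar-algebras) with the \cstar-analogue of \cite[Proposition 2.7]{goldbring_universal_2024}, then contradicting the preceding non-computability theorem. Your additional care in flagging that a computable presentation must make both the operator norm and the trace effectively evaluable on generated points is a reasonable point of diligence that the paper elides, but it does not constitute a different route.
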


We are thus tantalizingly close to a negative solution of the Kirchberg embedding problem. However, achieving such a full resolution using our methods would require removing the dependence on traces. An entirely new approach appears to be necessary.

\printbibliography

\end{document}